\providecommand{\tabularnewline}{\\}
\numberwithin{equation}{section}
\numberwithin{figure}{section}
  \theoremstyle{definition}
  \newtheorem*{problem*}{\protect\problemname}
\theoremstyle{plain}
\newtheorem{thm}{\protect\theoremname}[section]
  \theoremstyle{plain}
  \newtheorem{conjecture}[thm]{\protect\conjecturename}
  \theoremstyle{definition}
  \newtheorem{defn}[thm]{\protect\definitionname}
  \theoremstyle{definition}
  \newtheorem{example}[thm]{\protect\examplename}
  \theoremstyle{remark}
  \newtheorem{rem}[thm]{\protect\remarkname}
  \theoremstyle{plain}
  \newtheorem{prop}[thm]{\protect\propositionname}
  \theoremstyle{remark}
  \newtheorem*{rem*}{\protect\remarkname}
  \theoremstyle{plain}
  \newtheorem{lem}[thm]{\protect\lemmaname}
  \theoremstyle{plain}
  \newtheorem{cor}[thm]{\protect\corollaryname}
  \providecommand{\conjecturename}{Conjecture}
  \providecommand{\corollaryname}{Corollary}
  \providecommand{\definitionname}{Definition}
  \providecommand{\examplename}{Example}
  \providecommand{\lemmaname}{Lemma}
  \providecommand{\problemname}{Problem}
  \providecommand{\propositionname}{Proposition}
  \providecommand{\remarkname}{Remark}
\providecommand{\theoremname}{Theorem}
\begin{document}

\title[The LCDC and vertical surface area in warped products]{The Log-Convex Density Conjecture and vertical surface area in warped
products. }

\author{Sean Howe}
\begin{abstract}
We examine the vertical component of surface area in the warped product
of a Euclidean interval and a fiber manifold with product density.
We determine general conditions under which vertical fibers minimize
vertical surface area among regions bounding the same volume and use
these results to conclude that in many such spaces vertical fibers
are isoperimetric. Our main hypothesis is that the surface area of
a fiber be a convex function of the volume it bounds. We apply our
results in the specific case of $\mathbb{R}^{n}-\{0\}$ realized as
the warped product $(0,\infty)\times_{r}S^{n-1}$, providing many
new examples of densities where spheres about the origin are isoperimetric,
including simple densities with finite volume, simple densities that
at the origin are neither log-convex nor smooth, and non-simple densities.
We also generalize the results of Kolesnikov and Zhdanov\emph{ }on
large balls in $\mathbb{R}^{n}$ with increasing strictly log-convex
simple density. We situate our work in relation to the Log-Convex
Density Conjecture of Rosales \emph{et al. }and the recent work by
Morgan, Ritoré, and others on formulating a generalized log-convex
density/stable spheres conjecture. 
\end{abstract}
\maketitle
\emph{MSC Classification: 53A10, 49Q20. Key words: Manifolds with
density, Log-Convex Density Conjecture, isoperimetric, warped product,
product density, stable spheres.}

Sean Howe (seanpkh@gmail.com), University of Chicago Math Department,
5734 S. University Avenue, Chicago IL 60615. 

\tableofcontents{}

\section{Introduction}

In this paper we consider the isoperimetric problem in manifolds with
density: 
\begin{problem*}
In a Riemannian manifold $M$ equipped with a positive function $\Psi_{S}$
weighting surface area and a positive function $\Psi_{V}$ weighting
volume, which region has the least weighted surface area among all
regions of weighted volume $V_{0}$? 
\end{problem*}
In this paper a hypersurface is always rectifiable and a region always
has rectifiable boundary. A region is called isoperimetric if it has
minimal weighted surface area among all regions of the same weighted
volume. A hypersurface is called isoperimetric if it bounds an isoperimetric
region. We note that there is no a priori guarantee that an isoperimetric
region exists, and indeed there are simple examples of spaces with
density where isoperimetric regions do not exist (see, e.g., \cite[Prop. 7.3]{diazetal}).
In the rest of this paper we will omit the term ``weighted'' before
surface area and volume and refer to regular surface area and volume
as ``unweighted.\textquotedbl{} For a general reference on manifolds
with density see \cite{morganmanwithdens} or better, \cite[Ch. 18]{morgangmt}. 

The function $\Psi_{S}$ from above will be referred to as the \emph{surface
density} or \emph{perimeter density} and the function $\Psi_{V}$
as the \emph{volume density} (or, sometimes on 2-dimensional manifolds,
\emph{area density}). By a conformal change of metric one can always
take $\Psi_{S}=\Psi_{V}$ (see Proposition \ref{pro:Conformal-To-Simple-Basic}),
which is referred to as \emph{simple density}, however it is often
more convenient to vary the density than to vary the metric and so
we allow $\Psi_{S}$ and $\Psi_{V}$ to differ. Other interesting
and sometimes useful special cases are $\Psi_{V}=1$ (\emph{surface
density} or \emph{perimeter density}, see e.g. \cite{bettaEtAl},
\cite[Thm. 7.4]{diazetal}) and $\Psi_{S}=1$ (\emph{volume density},
see e.g. \cite[proof of Thm. 4.8]{diazetal}, \cite{morganBlogLogConvex}). 

Most work in manifolds with density has focused on $\mathbb{R}^{n}$
with simple density (see e.g. \cite[Ch. 18]{morgangmt}, \cite{carrolletal},
\cite{rosalesdensity}, \cite{englesteinetal}, \cite{maurmannMorgan},
\cite{caneteetal}, \cite{dahlbergetal}, \cite{kolesnikovRadiallySymmetric},
\cite{RatzikinWedgeDomains}, \cite{diazetal}). Of particular interest
is radial simple density (where the density is a function of the radius),
where interest has centered around the following conjecture of Rosales
\emph{et al.} \cite[Conj. 3.12]{rosalesdensity}:
\begin{conjecture}[{Log-Convex Density Conjecture \cite[Conj. 3.12]{rosalesdensity} }]
\label{con:LCDC}In $\mathbb{R}^{n}$, $n\geq2$ with radial log-convex
simple density, balls about the origin are isoperimetric for every
volume.
\end{conjecture}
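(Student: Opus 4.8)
The plan is to attack Conjecture~\ref{con:LCDC} through the warped-product machinery of this paper, writing $\mathbb{R}^{n}\setminus\{0\}=(0,\infty)\times_{r}S^{n-1}$ and transporting the radial density $\Psi=e^{\psi}$ to a product density there. First I would record the two functions attached to the foliation by origin-centered spheres: the (weighted) area $A(r)=\omega_{n-1}e^{\psi(r)}r^{n-1}$ of the fiber $\{r\}\times S^{n-1}$ and the (weighted) volume $V(r)=\omega_{n-1}\int_{0}^{r}e^{\psi(s)}s^{n-1}\,ds$ it encloses, noting $V'=A$. Writing $\mathcal{A}$ for $A$ viewed as a function of $V$, one gets $\mathcal{A}'=(\log A)'=\frac{n-1}{r}+\psi'(r)$, so the paper's main hypothesis --- that $\mathcal{A}$ be convex --- is exactly the requirement $\psi''(r)\ge \frac{n-1}{r^{2}}$.

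Granting this convexity, the heart of the matter is the comparison advertised in the abstract: origin-centered spheres minimize vertical surface area among regions of a fixed volume, where the vertical surface area of $\partial\Omega$ is the part weighted by $|\langle\nu,\partial_{r}\rangle|\le 1$, so that it is dominated by $P(\Omega)$ with equality precisely on vertical fibers. For a region $\Omega$ put $\widetilde v(t)=e^{\psi(t)}t^{n-1}\mu_{S^{n-1}}(\Omega_{t})\in[0,A(t)]$, where $\Omega_{t}$ is the slice at radius $t$, so $V(\Omega)=\int_{0}^{\infty}\widetilde v$. Separating from $\partial\Omega$ the part ruled by the $\partial_{r}$-lines (which carries the fiber-perimeter contribution $\int_{0}^{\infty}P_{S^{n-1}}(\Omega_{t})\,dt$ and no vertical surface area) from the rest, the coarea formula bounds the vertical surface area below by the total variation of the measure $D\widetilde v-\mathcal{A}'(t)\,\widetilde v\,dt$; after the substitution $u=\widetilde v/A\in[0,1]$, which uses $A'=\mathcal{A}'A$, this becomes $\int_{0}^{\infty}A\,|Du|$ with the constraint $\int_{0}^{\infty}Au=V_{0}$. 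Applying the one-dimensional coarea formula to $u$, Jensen's inequality for the convex function $\mathcal{A}$, and the fact that $A(r)\to 0$ as $r\to 0$ (so a half-line $(0,\rho)$ is the cheapest set of prescribed weight $V(\rho)$), this infimum equals $\mathcal{A}(V_{0})=A(r_{0})$, attained by the ball $B_{r_{0}}$. Since $P\ge$ vertical surface area with equality for spheres, the conjecture follows --- under the extra hypothesis $\psi''\ge(n-1)/r^{2}$. Alongside this one must handle the routine technicalities: approximating an arbitrary finite-perimeter competitor by smooth regions, controlling slices through the coarea formula, and checking that the deleted origin loses no mass.

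The main obstacle is exactly the gap between log-convexity ($\psi''\ge 0$) and the stronger $\psi''\ge(n-1)/r^{2}$ that the warped-product argument wants: for $r$ small the right-hand side blows up, so the hypothesis necessarily fails near the origin, and there the fibers are tiny, making the bound $P\ge$ vertical surface area far too lossy. To close the conjecture in full one must feed the \emph{discarded} contribution $\int_{0}^{\infty}P_{S^{n-1}}(\Omega_{t})\,dt$ --- which a ball about the origin contributes nothing to --- back into the estimate to cover the deficit on $\{r:\psi''(r)<(n-1)/r^{2}\}$, making this trade-off quantitative and uniform over all competitors. This, together with the absence of an a~priori existence theorem for isoperimetric regions under a general radial density, is why these methods are expected to yield many new \emph{families} of densities for which origin-centered spheres are isoperimetric rather than the full conjecture; I would accordingly first establish the conditional statement above and then try to interpolate between the vertical estimate away from the origin and a direct fiber-isoperimetric estimate near it, where the genuine difficulty lies.
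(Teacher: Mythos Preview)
The paper does not prove Conjecture~\ref{con:LCDC}; it is stated there as an open conjecture, and the paper's contributions are deliberately partial. Your proposal is candid about this: you correctly compute that the paper's key hypothesis---convexity of $V\mapsto\mathcal{A}(V)$---is equivalent, for a simple density $e^{\psi}$, to $\psi''\ge(n-1)/r^{2}$, which is strictly stronger than log-convexity $\psi''\ge 0$ and necessarily fails near $r=0$ for any smooth density. The argument you sketch for the conditional statement---fiber decomposition, the trivial bound $P\ge$ vertical surface area, and Jensen's inequality applied to the convex $\mathcal{A}$---is exactly the mechanism of Lemmas~\ref{lem:Model-MainLemma}--\ref{lem:Model-MinVertAllRegions} and Theorem~\ref{thm:GeneralFibersDensityMinimizeTanUnIso}, so on that front you have recovered the paper's method. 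The paper makes the same diagnosis you do (see the Remark after Theorem~\ref{thm:IsoperimetricRegionsSimpleDensity}): for a smooth simple density the inequality $\psi''\ge(n-1)/r^{2}$ is incompatible with $|\partial B_{r}|\to 0$ as $r\to 0$, so case~(1) of the main theorems simply does not apply in the setting of the conjecture.

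Your proposed cure---feeding the discarded horizontal contribution $\int_{0}^{\infty}P_{S^{n-1}}(\Omega_{t})\,dt$ back in to compensate for the convexity deficit on $\{r:\psi''(r)<(n-1)/r^{2}\}$---is not in the paper and is the genuinely hard part. The paper makes no attempt at it, and there is no evidence that a uniform, competitor-independent trade-off of this kind exists; the vertical-surface-area technique discards precisely the angular information one would need to control competitors that are far from radially symmetric near the origin. So your proposal accurately reproduces the paper's positive results and correctly locates the obstruction, but it remains, like the paper itself, a program rather than a proof of Conjecture~\ref{con:LCDC}.
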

For a simple density on $\mathbb{R}^{n}$, log-convexity on $\mathbb{R}^{n}-\{0\}$
is equivalent to the stability of all spheres about the origin, a
necessary condition for balls about the origin to be isoperimetric.
As pointed out by Morgan \cite{morganBlogLogConvex,Morgan-PubLogConvexDensity},
however, a further regularity condition at the origin is necessary
to avoid examples such as $\mathbb{R}^{n}$ with simple density $e^{r^{2}-2r+2}$
where for small volumes isoperimetric regions are approximate balls
centered on the unit sphere. The Log-Convex Density Conjecture, then,
posits that log-convexity at the origin would be sufficient to avoid
this and any other counterexample. Chambers \cite{chambersLCDCproof}
has recently announced a proof of Conjecture \ref{con:LCDC} along
with an optimal uniqueness statement, thus completely characterizing
the isoperimetric regions in $\mathbb{R}^{n}$ with radial log-convex
simple density. 

However, there are also natural examples of densities log-convex on
$\mathbb{R}^{n}-\{0\}$ but not at the origin where spheres about
the origin are nonetheless isoperimetric, though bounding volume at
infinity instead of at the origin. For example, Diaz \emph{et al.}
\cite[ Prop 7.5]{diazetal} show that in $\mathbb{R}^{n}-\{0\}$ with
simple density $r^{p},\; p<-n$, spheres about the origin are isoperimetric
bounding volume at infinity. One might ask whether there is a more
general conjecture including both the classic Log-Convex Density Conjecture
and these examples. One can also ask whether it is possible to formulate
a sufficient condition when spheres are stable but the perimeter and
volume density are allowed to differ, as suggested by Morgan \cite{morganBlogLogConvex,MorganBlogVariationalFormulae,Morgan-PubLogConvexDensity},
who also calculates the stability condition explicitly in this case.

Any such formulation would need to overcome multiple obstacles --
in addition to the above example, if we remove log-convexity at the
origin then we also get densities such as $\mathbb{R}^{n}$ with simple
density $r^{-p},\;0<p\leq n$ where spheres are stable but  isoperimetric
regions do not even exist (see \cite[Prop. 7.3]{diazetal}). While
it is still unclear what the most general statement of the conjecture
should be, our Theorem \ref{thm: Isoperimetric Regions - General},
which states that spheres about the origin are isoperimetric in $\mathbb{R}^{n}-\{0\}$
with any radial density such that the surface area of such spheres
is a convex function of the volume they bound and satisfying some
additional minor hypotheses, allows us to give several interesting
new examples of densities on $\mathbb{R}^{n}-\{0\}$ for which spheres
about the origin are isoperimetric. These examples include simple
densities with finite volume, simple densities that are neither log-convex
nor smooth at the origin, and non-simple densities (see Example \ref{exa:IsoperimExamples}).
In particular, the existence of large families of densities on $\mathbb{R}^{n}$
which are neither log-convex nor smooth at the origin for which stable
spheres are nonetheless isoperimetric complicates the formulation
suggested in \cite{morganBlogLogConvex,Morgan-PubLogConvexDensity}.
In Theorem \ref{thm:SingleFiberGeneralMinimizeTanIso}, Corollary
\ref{cor:LargeVerticalFibers}, and Corollary \ref{cor:LargeBalls-SimpleDensity}
we generalize a result of Kolesnikov and Zhdanov \cite[Prop. 4.7]{kolesnikovRadiallySymmetric}
on large balls about the origin in $\mathbb{R}^{n}$ with strictly
log-convex increasing simple density. We use this generalization to
show that for a large family of densities on $\mathbb{R}^{n}$ for
which spheres about the origin are stable they are also isoperimetric
for large volumes, and we provide several examples of densities where
these results apply, including the above-mentioned example of $\mathbb{R}^{n}$
with simple density $e^{r^{2}-2r+2}$, thus giving an example of a
space where all spheres about the origin are stable but only some
are isoperimetric (Example \ref{exa:LargeBalls}). 

We obtain our results by analysing the component of surface area tangential
to spheres about the origin. This is a special case of the concept
of vertical surface area, which we define for a rectifiable hypersurface
in the warped product of a real interval and a Riemannian manifold:
\begin{defn}
\label{def:VerticalSurfaceArea}Let $L$ be a Riemannian manifold
of dimension $n-1$ with metric $dl^{2}$ and let $Z$ be the interval
$(A,B),\; A<B\in\mathbb{R}\cup\pm\{\infty\}$ with the usual metric
$dr^{2}$. Consider the warped product $Z\times_{g}L$ with continuous
warp factor $g$ giving metric $dr^{2}+g(r)^{2}dl^{2}$ and continuous
surface density $\Phi_{S}$ and volume density $\Phi_{V}$. The \emph{vertical
surface area }of a rectifiable hypersurface $H$ in the warped product
with density $Z\times_{g}L$ is 
\[
|H|_{Vert}=\int_{H}|\vec{n}\cdot\vec{r}|\Phi_{S}d\mathcal{H}^{n-1}
\]
where at any point $\vec{r}$ is the positively oriented unit vector
perpendicular to $L$, $\vec{n}$ is the outward normal vector to
the surface, and $d\mathcal{H}^{n-1}$ is the $n-1$ dimensional Hausdorff
measure on $H$ inherited from the warped product $Z\times_{g}L$
without density. Locally where $H$ is a graph over $L$, 
\[
|H|_{Vert}=\int\Phi_{S}(r(l),l)g(r(l))^{n-1}dL
\]

\end{defn}
Note that where $H$ is parallel to horizontal fibers the contribution
to vertical surface area is $0$ and so we can always calculate vertical
surface area using only the local formula. Further, the definition
of vertical surface area gives trivially the expected inequality
\[
|H|_{Vert}\leq|H|,
\]
where $|H|$ is the surface area of $H$, with equality only when
$H$ is a union of vertical fibers. 

We note that general warped products with density have already appeared
in \cite{morgansym}. 
\begin{example}
We can realize $\mathbb{R}^{n}-\{0\}$ with Euclidean metric as the
warped product $(0,\infty)\times_{r}S^{n-1}$. In this context, we
often refer to vertical surface area as \emph{tangential surface area
}because it is the component of surface area tangential to spheres
about the origin. Analysis of tangential surface area was used in
\cite[Prop. 4.3]{carrolletal} and \cite[Thm. 7.4]{diazetal} to prove
that in $\mathbb{R}^{n}-\{0\}$ with density $r^{p},\; p<-n$ , spheres
about the origin minimize tangential surface area and are thus isoperimetric
(bounding volume at infinity), and similarly in \cite[Prop. 7.5]{diazetal}
to give a new proof of the result of Betta, et al. \cite[Thm. 4.3]{bettaEtAl}
that in $\mathbb{R}^{n}$ with certain surface densities spheres about
the origin are isoperimetric. Section \ref{sec:Vertical-surface-area-Warped-Products}
generalizes and refines these ideas.\end{example}
\begin{rem}
One can also study the weaker inequality $\int_{H}(\vec{n}\cdot\vec{r})\cdot f\cdot\Phi_{S}d\mathcal{H}^{n-1}<|H|$
where $f$ is a function on the real interval with $|f|\leq1$. This
is the approach taken by Kolesnikov and Zhdanov \cite{kolesnikovRadiallySymmetric}
in their Proposition 6.7 and the surrounding discussion in the setting
of $\mathbb{R}^{n}$ with increasing simple radial density. Using
this formula and the divergence theorem they show that for density
$e^{\phi(r)}$ with $\phi$ convex, radially symmetric and superlinear
(e.g. $e^{r^{\alpha}},\;\alpha>1$), large balls about the origin
are isoperimetric. In Corollaries \ref{cor:LargeVerticalFibers} and
\ref{cor:LargeBalls-SimpleDensity} we generalize this result using
Theorem \ref{thm:SingleFiberGeneralMinimizeTanIso} which gives conditions
on when a single vertical fiber in a warped product with density minimizes
vertical surface area, and in Example \ref{exa:LargeBalls} we give
several specific densities where our result applies. Our proof turns
on the use of comparison spaces with different surface densities as
developed in Section \ref{sec:Vertical-surface-area-Warped-Products}.
The weighting factor used by Kolesnikov and Zhdanov is similar, however,
it is not exactly analogous -- their weighting factor can be negative
valued whereas our comparison spaces always have positive surface
densities. The difference results primarily from the absence of an
absolute value around the term $\vec{n}\cdot\vec{r}$ in their approach
which thus gives a weaker inequality but allows the application of
the divergence theorem. 
\end{rem}
The most closely related results are those of Kolesnikov and Zhdanov
\cite[Sec. 6]{kolesnikovRadiallySymmetric} on large balls in $\mathbb{R}^{n}$
with increasing strictly log-convex density and those of Diaz et al.
\cite[Sec. 7]{diazetal} on $\mathbb{R}^{n}$ with density $r^{p},$
$p<0$ , both of which are generalized by this work. We note also
that both Montiel \cite{Montiel-StableCMC,MontielUnicityCMC} and
Rafalski \cite{rafalski-relativeisoperimwarpedproducts} have obtained
related results for graphs over horizontal regions in warped products. 

In Section \ref{sec:Vertical-surface-area-Warped-Products} we prove
the most general versions of our theorems in the context of warped
products with density. In Section \ref{sec:RnWithRadialDensity} we
apply these result to the most interesting case of $\mathbb{R}^{n}$
with radial density and give many specific examples. Although the
results of Section \ref{sec:RnWithRadialDensity} are stated only
for radial densities, they apply equally with product surface and
volume densities of the form $\Psi_{S}(r)\Phi(\Theta)$, $\Psi_{V}(r)\Phi(\Theta)$,
and even more generally to product densities of this form on any warped
product $(0,\infty)\times_{r}K$ with $K$ a compact Riemannian manifold
(see the beginning of Section \ref{sec:RnWithRadialDensity}).

\subsection*{Acknowledgements}

We would like to thank Frank Morgan for his invaluable insight and
advice throughout the preparation of this paper, as well as the 2010
SMALL Geometry Group \cite[Sec. 6]{LiEtAlG11TilingsandIsoperimetry}
for inspiring in us a renewed interest in the Log-Convex Density Conjecture
with their work on the density $e^{r}$. We would also like to thank
an anonymous referee for their careful reading and helpful comments.
The author was supported by an Erasmus Mundus scholarship and enrolled
in the ALGANT integrated master course during part of the preparation
of this paper.

\section{\label{sec:Vertical-surface-area-Warped-Products}Minimization of
vertical surface area in warped products}

Proposition \ref{pro:Conformal-To-Simple-Basic} is a well-known result.
\begin{prop}
\label{pro:Conformal-To-Simple-Basic}Let $M$ be an $n-$dimensional
Riemannian manifold equipped with metric $dm^{2}$, continuous surface
density $\Phi_{S}$, and continuous volume density $\Phi_{V}$. There
exists a continuous conformal change of metric on $M$, $d\tilde{m}^{2}$,
and a positive continuous function, $\Psi$, such that the volume
and surface area of a region in $M$ with metric $dm^{2}$ and densities
$\Phi_{V}$ and $\Phi_{S}$ is the same as the volume and surface
area of the same region in $M$ with metric $d\tilde{m}^{2}$ and
simple density $\Psi$. \end{prop}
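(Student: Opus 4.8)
The plan is to look for the new metric within the conformal class of $dm^2$: write $d\tilde m^2 = \lambda^2\, dm^2$ for an unknown positive continuous function $\lambda$ on $M$, and determine $\lambda$ together with $\Psi$ by matching weighted volumes and weighted surface areas. Under such a conformal change the unweighted Riemannian volume form transforms by $dV_{\tilde m} = \lambda^n\, dV_m$, since at each point the metric on the tangent space is multiplied by $\lambda^2$ and hence the induced top-degree volume form by $\lambda^n$; likewise, on any $(n-1)$-dimensional rectifiable hypersurface $H$ the induced Hausdorff measures satisfy $d\mathcal{H}^{n-1}_{\tilde m} = \lambda^{n-1}\, d\mathcal{H}^{n-1}_m$, because the approximate tangent plane to $H$ at $\mathcal{H}^{n-1}_m$-a.e.\ point carries a metric scaled by $\lambda^2$, which scales $(n-1)$-dimensional measure on it by $\lambda^{n-1}$.

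Granting these transformation rules, requiring the weighted volume of every region $\Omega$ to agree, i.e. $\int_\Omega \Phi_V\, dV_m = \int_\Omega \Psi\, dV_{\tilde m} = \int_\Omega \Psi\, \lambda^n\, dV_m$, forces $\Psi\,\lambda^n = \Phi_V$ pointwise; the analogous requirement for surface area forces $\Psi\,\lambda^{n-1} = \Phi_S$. Dividing gives $\lambda = \Phi_V/\Phi_S$, and substituting back gives $\Psi = \Phi_S/\lambda^{n-1} = \Phi_S^{\,n}/\Phi_V^{\,n-1}$. Both $\lambda$ and $\Psi$ are positive and continuous because $\Phi_S$ and $\Phi_V$ are, so $d\tilde m^2 := (\Phi_V/\Phi_S)^2\, dm^2$ is a genuine continuous Riemannian metric and $\Psi$ a valid simple density; it then suffices to reverse the two computations above to see that volumes and surface areas indeed agree for every region and every rectifiable hypersurface.

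The only point demanding care --- and the main, mild, obstacle --- is justifying the transformation rule for $\mathcal{H}^{n-1}$ when $\lambda$ is merely continuous, so that $d\tilde m^2$ need not be smooth and one cannot simply quote the usual Jacobian computations of smooth Riemannian geometry. The fix is local and measure-theoretic: near any point $\lambda$ is pinched between positive constants, so the identity map $(M,dm)\to(M,d\tilde m)$ is locally bi-Lipschitz, hence carries rectifiable sets to rectifiable sets and preserves the almost-everywhere existence of approximate tangent planes; at such a point a blow-up argument (equivalently, the area formula in a chart, using that the bi-Lipschitz distortion tends to $1$ at scale zero by continuity of $\lambda$) identifies the Radon--Nikodym derivative $d\mathcal{H}^{n-1}_{\tilde m}/d\mathcal{H}^{n-1}_m$ with $\lambda^{n-1}$ at that point, and the volume identity is the corresponding statement for a top-dimensional rectifiable set (or just change of variables in a chart). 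Granting this, the proposition follows; being a well-known fact, I expect the author to either cite it or sketch precisely this argument.
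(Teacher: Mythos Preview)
Your proposal is correct and arrives at exactly the same conformal factor $\lambda=\Phi_V/\Phi_S$ and simple density $\Psi=\Phi_S^n/\Phi_V^{n-1}$ as the paper. The paper's proof consists solely of writing down these two formulas without further justification, so your argument is strictly more detailed than the original.
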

\begin{proof}
Take $d\tilde{m}^{2}=\left[\left(\frac{\Phi_{V}}{\Phi_{S}}\right)dm\right]^{2}$
and $\Psi=\frac{\Phi_{S}^{n}}{\Phi_{V}^{n-1}}$.
\end{proof}
One consequence of Proposition \ref{pro:Conformal-To-Simple-Basic}
is that when considering the isoperimetric problem in a manifold with
density one can always reduce to the case of simple density. One can
also always make a similar change of coordinates in order to work
with volume density or surface density. This reduction, however, does
not in general preserve the structure of a warped product, as clarified
by the following example:
\begin{example}
Let $X$ be a Riemannian manifold with metric $dx^{2}$ and $Y$ a
Riemannian manifold with metric $dy^{2}$, and let $g$ be a positive
continuous function on $X$. We examine Proposition \ref{pro:Conformal-To-Simple-Basic}
in the case where $M$ is the warped product $X\times_{g}Y$ with
metric $dm^{2}=dx^{2}+(g(x)dy)^{2}$ and continuous product densities
$\Phi_{S}^{X}\Phi_{S}^{Y}$ and $\Phi_{V}^{X}\Phi_{V}^{Y}$. The new
metric is given by 
\[
d\tilde{m}^{2}=\left(\frac{\Phi_{V}^{X}\Phi_{V}^{Y}}{\Phi_{S}^{X}\Phi_{S}^{Y}}\right)^{2}\left(dx^{2}+(g(x)dy)^{2}\right)
\]
and thus $M$ with metric $d\tilde{m}^{2}$ is no longer necessarily
a warped product of $X$ and $Y$ even after conformal changes of
metric in $X$ and $Y$. However, if instead we only move the $X$
component of the densities into the new metric on $M$ then we can
absorb this factor into the metric on $X$ and the warp factor $g(x)$
in order to obtain a new space given by a warped product $\tilde{X}\times_{\tilde{g}}Y$
(the tilde denoting the change of metric on $X$ and the change of
the warp factor $g$) with surface and volume densities differing
only in their $Y$ component and such that surface areas and volumes
of regions are the same as in the original warped product $M$. In
particular, if $\Phi_{S}^{Y}=\Phi_{V}^{Y}=1$ then this new space
is a warped product with simple density. 

If we are only interested in the vertical surface area, however, then
we can always obtain a simple comparison space in the form of a simple
product with surface density:\end{example}
\begin{prop}
\label{pro:ModelVertical}Let $L$ be a Riemannian manifold of dimension
$n-1$ with metric $dl^{2}$ and let $Z$ be the interval $(A,B),\; A<B\in\mathbb{R}\cup\pm\{\infty\}$
with the usual metric $dr^{2}$. Consider the warped product $Z\times_{g}L$
with continuous warp factor $g$ and continuous product densities
$\Phi_{S}^{Z}\Phi_{S}^{L}$ and $\Phi_{V}^{Z}\Phi_{V}^{L}$. For a
fixed $a\in(A,B)$, let $s(r)$ be the function 
\[
s(r)=\int_{a}^{r}\Phi_{V}^{Z}(t)g(t)^{n-1}dt,
\]
and let $\tilde{Z}$ be the interval $(s(A),s(B)).$ Let $\Psi(s)=\Phi_{S}^{Z}(r(s))g(r(s))^{n-1}$
and let $\tilde{L}$ be the space $L$ after conformal change of metric
$d\tilde{l}=(\left[\Phi_{V}^{L}\right]^{1/(n-1)}dl)$. Then, the map
$(r,l)\mapsto(s(r),l)$ from $Z\times_{g}L$ with densities $\Phi_{S}^{Z}\Phi_{S}^{L}$
and $\Phi_{V}^{Z}\Phi_{V}^{L}$ to the product $\tilde{Z}\times\tilde{L}$
with surface density $\Psi(s)\frac{\Phi_{S}^{L}(l)}{\Phi_{V}^{L}(l)}$
is a $C^{1}$ diffeomorphism that preserves volume and vertical surface
area. \end{prop}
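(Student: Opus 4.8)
The plan is to verify the three assertions of the proposition — that the displayed map is a $C^1$ diffeomorphism, that it preserves volume, and that it preserves vertical surface area — directly from the definitions, the only genuine work being the bookkeeping of Jacobian factors.

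First I would establish the diffeomorphism. Since $\Phi_V^Z$ and $g$ are positive and continuous, $s(r)=\int_a^r\Phi_V^Z(t)g(t)^{n-1}\,dt$ is $C^1$ on $(A,B)$ with $s'(r)=\Phi_V^Z(r)g(r)^{n-1}>0$ everywhere; hence $s$ is a strictly increasing $C^1$ bijection of $(A,B)$ onto $\tilde Z=(s(A),s(B))$ with $C^1$ inverse $r(s)$, and forming the product with the identity on the fiber gives the asserted $C^1$ diffeomorphism onto $\tilde Z\times\tilde L$. I would note in passing that $C^1$ is optimal here precisely because $\Phi_V^Z$ is assumed only continuous. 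I would also record the structural fact that, being a product of $s$ with the identity, the map sends level sets of $r$ to level sets of $s$ and fibers $\{l\}\times Z$ to fibers $\{l\}\times\tilde Z$; in particular it carries a hypersurface that is locally a graph over $L$ to one that is locally a graph over $\tilde L$, and it sends pieces parallel to a horizontal fiber to pieces parallel to a horizontal fiber.

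For volume I would compute the weighted volume elements on the two sides and match them. On the source, the Riemannian volume element of $dr^2+g(r)^2\,dl^2$ is $g(r)^{n-1}\,dr\,dL$, so the weighted element is $\Phi_V^Z(r)\Phi_V^L(l)\,g(r)^{n-1}\,dr\,dL$. On the target, which is a simple product with volume density $1$, the volume element is $ds\,d\tilde L$; since the conformal factor $[\Phi_V^L]^{1/(n-1)}$ rescales each of the $n-1$ fiber directions, $d\tilde L=\Phi_V^L(l)\,dL$, and substituting $ds=\Phi_V^Z(r)g(r)^{n-1}\,dr$ identifies the two volume elements pointwise. Volume preservation for a region and its image then follows from the one-dimensional substitution $s=s(r)$ in the fiber direction together with Fubini, and this remains valid even when $s(A)$ or $s(B)$ is infinite.

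For vertical surface area I would use the local graph formula of Definition \ref{def:VerticalSurfaceArea}, which by the remark following that definition always computes the vertical surface area (pieces parallel to a horizontal fiber contributing $0$ on both sides). If $H$ is locally the graph $r=r(l)$ over $L$, its vertical surface area is $\int\Phi_S^Z(r(l))\Phi_S^L(l)\,g(r(l))^{n-1}\,dL$; its image is the graph $s=s(r(l))$ over $\tilde L$ inside the simple product, which has warp factor $1$ and surface density $\Psi(s)\,\Phi_S^L(l)/\Phi_V^L(l)$, so there the vertical surface area is $\int \frac{\Phi_S^L(l)}{\Phi_V^L(l)}\,\Psi\big(s(r(l))\big)\,d\tilde L=\int\Psi\big(s(r(l))\big)\,\Phi_S^L(l)\,dL$. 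Since $r(s(r(l)))=r(l)$ one has $\Psi(s(r(l)))=\Phi_S^Z(r(l))\,g(r(l))^{n-1}$, so the two integrands agree and the surface areas coincide after summing over a graph cover of $H$. I do not expect a real obstacle; the step most liable to error is keeping track of the factor $\Phi_V^L$ arising from the fiber rescaling $d\tilde L=\Phi_V^L\,dL$ against the $\Phi_V^L$ in the denominator of the target surface density, so that they cancel exactly — this cancellation is precisely why the comparison space can be taken to be a simple \emph{product} rather than itself a warped product.
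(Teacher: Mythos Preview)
Your proposal is correct and follows essentially the same approach as the paper: both arguments reduce to checking that the local volume element $\Phi_V^Z(r)\Phi_V^L(l)g(r)^{n-1}\,dr\,dL$ becomes $ds\,d\tilde L$ and that the local vertical-surface-area element $\Phi_S^Z(r)\Phi_S^L(l)g(r)^{n-1}\,dL$ becomes $\Psi(s)\frac{\Phi_S^L(l)}{\Phi_V^L(l)}\,d\tilde L$. Your version is simply more explicit about the $C^1$ diffeomorphism claim and about the cancellation of the $\Phi_V^L$ factors, which the paper leaves implicit.
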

\begin{rem*}
If $L$ with density $\Phi_{V}^{L}$ has finite volume then the coordinate
$s(r)$ of Proposition \ref{pro:ModelVertical} is a constant times
the signed volume of the region $(a,r)\times L$, and if fibers $\{r\}\times L$
have finite surface area then $\Psi(s)$ is a constant times the surface
area of the fiber $\{r(s)\}\times L$. \end{rem*}
\begin{proof}
We examine the local elements of volume in these two spaces: 
\[
dV_{Z\times_{g}L}=\Phi_{V}^{Z}(r)\Phi_{V}^{L}(l)g(r)^{n-1}drdL=dsd\tilde{L}=dV_{\tilde{Z}\times\tilde{L}}
\]
thus volume is preserved. For the local elements of vertical surface
area, we observe

\[
\Phi_{S}^{Z}(r)\Phi_{S}^{L}(l)g(r)^{n-1}dL=\Psi(s)\frac{\Phi_{S}^{L}(l)}{\Phi_{V}^{L}(l)}d\tilde{L},
\]
which completes the proof. 
\end{proof}

In Lemmas \ref{lem:Model-MainLemma} and \ref{lem:Model-MinVertAllRegions}
we give sufficient conditions for vertical fibers to minimize vertical
surface area in an important family of these model spaces. In order
to state our results in sufficient generality for later applications,
we need the notion of net volume. For two oriented hypersurfaces $H_{1}$
and $H_{2}$ in a Riemannian manifold $M$ we will say that $H_{1}-H_{2}$
bounds a signed oriented region $R$ if, in the language of geometric
measure theory (cf. \cite{morgangmt}), $R$ is an integral current
with boundary $H_{1}-H_{2}$. The integral current $R$ in this case
is just a formal sum $R=\sum n_{i}R_{i}$ of rectifiable sets $R_{i}$
of finite volume with $\sum n_{i}\partial R_{i}=H_{1}-H_{2}$. For
example, in $\mathbb{R}^{n}$ with a metric giving finite total volume,
two spheres $H_{1}$ and $H_{2}$ of different radii centered at the
origin with appropriate orientations give an annulus $A$ with boundary
$H_{1}-H_{2}$. The complement $\mathbb{R}^{n}\backslash A$ with
the natural orientation from $\mathbb{R}^{n}$ is equal (as a current)
to $\mathbb{R}^{n}-A$; its boundary is $H_{2}-H_{1}$, thus we conclude
that $H_{1}-H_{2}$ also bounds the current $-\mathbb{R}^{n}\backslash A=A-\mathbb{R}^{n}$
(in general, if $H_{1}$ and $H_{2}$ live in an oriented manifold
$M$ of finite volume then two regions bounded by $H_{1}-H_{2}$ differ
by a multiple of $M$). In any case, if $H_{1}-H_{2}$ bounds $R=\sum n_{i}R_{i}$,
the net volume of $R$ is $\sum n_{i}\mathrm{Vol}(R_{i})$, which
is well defined. Note that in the example, $A$ has positive volume
and $-\mathbb{R}^{n}\backslash A$ has negative volume, so that the
net volume bounded by $H_{1}-H_{2}$ depends on the choice of region
and can be positive, negative, or even zero. 
\begin{lem}
\label{lem:Model-MainLemma} Let $L$ be a Riemannian manifold of
dimension $n-1$ with metric $dl^{2}$ giving finite total $n-1$
dimensional measure and let $Z$ be the interval $(A,B),\; A,B\in\mathbb{R}\cup\pm\{\infty\}$
with the usual metric $dr^{2}$. In the product $Z\times L$ with
volume density 1 and convex surface density $\Psi(r)$, for any rectifiable
hypersurface $H$ and $r_{0}\in Z$ such that $H-\{r_{0}\}\times L$
is the boundary of a signed oriented region of net volume 0 that is
bounded away from $A$ and $B$ in almost every horizontal fiber $Z\times\{l\}$,
$|H|_{Vert}\geq|\{r_{0}\}\times L|$. 

If $\Psi$ does not approach $0$ at $B$ (resp. $A$) the condition
that the region be bounded away from $B$ (resp. $A)$ in almost every
fiber can be weakened to the region not containing an open interval
about $B$ (resp. $A$) in almost every fiber. \end{lem}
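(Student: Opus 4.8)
The plan is to reduce everything, by slicing into one–dimensional fibers, to an elementary inequality for convex functions on an interval. Let $R$ be the region with $\partial R=H$, obtained by adjoining the fiber region $(A,r_{0})\times L$ to the signed region $\Omega$ with $\partial\Omega=H-\{r_{0}\}\times L$; by hypothesis $R$ coincides with $(A,r_{0})\times L$ near both ends of $Z$ in a.e.\ fiber, and the net volume of the signed difference $R\triangle\big((A,r_{0})\times L\big)=\Omega$ is $0$. Since $L$ has finite $\mathcal{H}^{n-1}$–measure and $H$ is rectifiable, slicing by the fibers $Z\times\{l\}$ gives, writing $R_{l}\subseteq Z$ for the slice of $R$,
\[
|H|_{Vert}=\int_{L}|\partial R_{l}|_{\Psi}\,d\mathcal{H}^{n-1}(l),\qquad |\partial R_{l}|_{\Psi}:=\sum_{p\in\partial R_{l}}\Psi(p),
\]
while, with $E(l):=\int_{Z}\big(\mathbf{1}_{R_{l}}-\mathbf{1}_{(A,r_{0})}\big)\,dr$ (a finite number by the ``bounded away'' hypothesis), the net volume condition reads $\int_{L}E(l)\,d\mathcal{H}^{n-1}(l)=0$. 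Thus it suffices to prove the pointwise bound $|\partial R_{l}|_{\Psi}\ge\Psi(r_{0})+c\,E(l)$ for a.e.\ $l$ and one fixed $c$ in the subdifferential of $\Psi$ at $r_{0}$; integrating over $L$ then kills the term $c\,E(l)$ and yields $|H|_{Vert}\ge\Psi(r_{0})\,\mathcal{H}^{n-1}(L)=|\{r_{0}\}\times L|$.

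The heart is this one–dimensional estimate. Fixing $l$, the two boundary conditions on $R_{l}$ force $R_{l}=(A,\beta_{1})\cup(\alpha_{2},\beta_{2})\cup\cdots\cup(\alpha_{m},\beta_{m})$ with $\beta_{1}<\alpha_{2}<\cdots<\alpha_{m}<\beta_{m}<B$, so $\partial R_{l}$ consists of the $2m-1$ points $\beta_{1},\alpha_{2},\beta_{2},\dots,\alpha_{m},\beta_{m}$, and a short integration by parts gives the two identities
\[
E(l)=(\beta_{1}-r_{0})+\sum_{j=2}^{m}(\beta_{j}-\alpha_{j})=(\beta_{m}-r_{0})+\sum_{i=1}^{m-1}(\beta_{i}-\alpha_{i+1}).
\]
Since $\Psi$ is convex its subdifferential at $r_{0}$ is an interval, so we may pick $c$ there of one sign. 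If $c\le 0$: write $|\partial R_{l}|_{\Psi}=\Psi(\beta_{1})+\sum_{j\ge 2}\big(\Psi(\alpha_{j})+\Psi(\beta_{j})\big)$, bound the unpaired term by the supporting line $\Psi(\beta_{1})\ge\Psi(r_{0})+c(\beta_{1}-r_{0})$, and bound each paired term by $\Psi(\alpha_{j})+\Psi(\beta_{j})\ge 0\ge c(\beta_{j}-\alpha_{j})$ (using $\Psi>0$, $\beta_{j}-\alpha_{j}>0$, $c\le 0$); summing gives $\Psi(r_{0})+c\,E(l)$ via the first identity. If $c\ge 0$: instead pair each right endpoint $\beta_{i}$ with the next left endpoint $\alpha_{i+1}$ and leave $\Psi(\beta_{m})$ unpaired; then $\Psi(\beta_{m})\ge\Psi(r_{0})+c(\beta_{m}-r_{0})$ and $\Psi(\beta_{i})+\Psi(\alpha_{i+1})\ge 0\ge c(\beta_{i}-\alpha_{i+1})$ (the gap $\beta_{i}-\alpha_{i+1}<0$ now has the favourable sign), and summing gives $\Psi(r_{0})+c\,E(l)$ via the second identity. (If $r_{0}$ is a minimum of $\Psi$, take $c=0$ and either grouping works.)

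The remaining points are routine. One must justify the slicing and the a.e.\ finiteness of $E(l)$, and dispose of the trivial case $|H|_{Vert}=\infty$. The place where the hypotheses really bite is the structural claim that $R_{l}$ has the displayed form for a.e.\ $l$: ``bounded away from $A$ and $B$'' is exactly what makes $R_{l}$ full near $A$ and empty near $B$, so that the leftmost component meets $A$, the rightmost stops before $B$, and there is a single unpaired boundary value to pin to the supporting line at $r_{0}$. I do not expect a genuine obstacle here; the only spot needing a little care is the sign–dependent choice of grouping, which is precisely what lets convexity of $\Psi$ do the work. Finally, the refinement in the closing remark is the observation that if $\Psi$ does not tend to $0$ at an endpoint of $Z$, then any slice with infinitely many components accumulating there already has $|\partial R_{l}|_{\Psi}=\infty$, hence $|H|_{Vert}=\infty$ and the inequality is trivial — so there it is enough that the region merely not be full near that endpoint.
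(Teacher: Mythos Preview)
Your argument is correct, and it takes a genuinely different route from the paper's. Both proofs slice into horizontal fibers and use the coarea identity $|H|_{Vert}=\int_L\sum_{p\in H\cap(Z\times\{l\})}\Psi(p)\,dL$, and both introduce the fiberwise signed length $E(l)$ (the paper calls it $h(l)$ after translating $r_0$ to $0$) which integrates to zero over $L$. The divergence is in how convexity is exploited. The paper argues in two steps: first, in each fiber it observes that $h(l)$ lies between the smallest and largest boundary points, so by convexity and positivity $\sum_i\Psi(r_i)\ge\Psi(r_{\min})+\Psi(r_{\max})\ge\Psi(h(l))$; second, it applies Jensen's inequality over $L$ to pass from $\int_L\Psi(h(l))\,dL$ to $\Psi(0)\cdot|L|$. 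You instead linearize immediately at $r_0$ via a supporting line $\Psi(t)\ge\Psi(r_0)+c(t-r_0)$, and prove the sharper pointwise bound $\sum_i\Psi(r_i)\ge\Psi(r_0)+c\,E(l)$ by a sign-dependent pairing of endpoints; integrating then kills the linear term without any appeal to Jensen. Your approach makes the combinatorics of the interval endpoints and the role of $\Psi>0$ more explicit, at the cost of the case split on $\operatorname{sgn}(c)$; the paper's argument is shorter and hides the same content inside the Jensen step (which is, after all, just the supporting-line inequality at the mean). Your treatment of the closing refinement matches the paper's exactly.
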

\begin{proof}
Suppose $H$ is such that $H-\{r_{0}\}\times L$ is the boundary of
a signed oriented region of net volume 0 that is bounded away from
$A$ and $B$ in almost every horizontal fiber $Z\times\{l\}$. By
translation of the interval $Z$, we can assume $r_{0}=0$. 

Let $R=R^{+}-R^{-}$ be the signed oriented region bounded by $H-\{0\}\times L$
of net volume 0 and bounded away from the origin and infinity in almost
every fiber $Z\times\{l\}$. Then, 
\[
0=\int_{R^{+}}dV-\int_{R^{-}}dV
\]
and by Fubini,
\[
\begin{array}{c}
0=\int_{L}h(l)dl\\
\mbox{where}\\
h(l)=h^{+}(l)+h^{-}(l),\\
h^{+}(l)=\int_{Z\times\{l\}\cap R^{+}}dr,\quad h^{-}(l)=-\int_{Z\times\{l\}\cap R^{-}}dr,
\end{array}
\]
and $h$, $h^{+}$, and $h^{-}$ are defined almost everywhere. Furthermore,
we claim that almost everywhere $h(l)$ is contained between the smallest
and largest $r$-coordinates of points in $Z\times\{l\}\cap H$ (which
is non-empty almost everywhere since for almost all $l$, $Z\times\{l\}\cap R$
is bounded away from $A$ and $B$): Since $Z\times\{l\}\cap R^{+}$
is bounded away from $B$ and all the $r$ coordinates in $R^{+}$
are positive, $h^{+}(l)$ is between $0$ and the largest $r$-coordinate
in $Z\times\{l\}\cap R^{+}$ which, if non-zero, is the largest $r$-coordinate
in $Z\times\{l\}\cap H$. Similarly, since $Z\times\{l\}\cap R^{-}$
is bounded away from $A$, $h^{-}(\Theta)$ is between the smallest
$r$-coordinate in $Z\times\{l\}\cap R^{-}$ and $0$, and the smallest
$r$-coordinate in $Z\times\{l\}\cap R^{-}$ is the smallest $r$-coordinate
in $Z\times\{l\}\cap H$ if it is non-zero. Thus, if both the intersections
$Z\times\{l\}\cap R^{+}$ and $Z\times\{l\}\cap R^{-}$ contain more
than the point with $r$-coordinate 0 then $h(l)$ is contained between
the smallest and largest $r$-coordinates of points in $Z\times\{l\}\cap H$.
If $Z\times\{l\}\cap R^{+}$ contains only the $r$-coordinate 0 then
it must be that $Z\times\{l\}\cap R^{-}$ contains the interval $(r_{max},0)$
where $r_{max}$ is the largest $r$-coordinate in $Z\times\{l\}\cap R^{-}$
which in this case is the largest $r$-coordinate in $Z\times\{l\}\cap H$,
and in particular $h^{-}(l)\leq r_{max}$ so that we again reach the
conclusion. We argue similarly if $Z\times\{l\}\cap R^{-}$ contains
only the $r$-coordinate 0. 

Since $H$ is rectifiable its intersections with the rays $Z\times\{l\}$
are $L$-almost everywhere transversal ($H$ differs from a countable
union of $C^{1}$ hypersurfaces by a set of $n-1$ dimensional Hausdorff
measure 0 and this holds for these hypersurfaces), and thus, by the
coarea formula,

\begin{eqnarray*}
|H|_{Vert} & = & \int_{L}\left(\sum_{r\in Z\times\{l\}\cap H}\Psi(r)\right)dL
\end{eqnarray*}
and since $\Psi$ is convex and $h$ is between the minimum and maximum
$r$-value in each fiber
\[
|H|_{Vert}\geq\int_{L}\Psi(h(l))dL,
\]
and by Jensen's inequality (applied to the normalization of the measure
$dL$ which has finite total measure),

\[
|H|_{Vert}\geq\int_{L}\Psi(0)dL,
\]
and the quantity on the right is the surface area of $\{0\}\times L$.

To prove the last statement of the Lemma we observe that in a fiber
where the region is neither bounded away from $B$ nor contains an
open interval about $B$, $H$ must intersect the fiber transversely
in infinitely many points in any neighborhood of $B$. Since $\Psi$
is convex and does not approach $0$ at $B$, it has a positive minimum
in a neighborhood of $B$ bounded away from $A$, and thus if there
is a set of positive measure of such fibers then $H$ has infinite
vertical surface area. Since we can assume that $H$ has finite vertical
surface area, we conclude the set of such fibers is of measure 0.
Since by hypothesis the set of fibers where the region contains an
open interval about $B$ has measure 0, almost everywhere the region
must be bounded away from $B$. \end{proof}
\begin{rem}
In the setting of Lemma \ref{lem:Model-MainLemma}, if $\Psi$ is
not convex then in general vertical fibers do not minimize vertical
surface area: Consider the cylinder $\mathbb{R}\times S^{1}$ with
non-convex smooth perimeter density $\Psi$. Since $\Psi$ is not
convex, there exist $r_{0},\, r_{1}\in\mathbb{R}$ such that $r_{0}<r_{1}$
and 
\[
\Psi\left(\frac{r_{0}+r_{1}}{2}\right)>\frac{\Psi(r_{0})+\Psi(r_{1})}{2}.
\]
Then the curve in $\mathbb{R}\times S^{1}$ given as the union of
the arcs $\{r_{0}\}\times[0,\pi]$ , $\{r_{1}\}\times[\pi,2\pi]$
and the radial segments $[r_{0},r_{1}]\times\{\pi\}$ and $[r_{0},r_{1}]\times\{0\}$
bounds net area 0 with the circle $\left\{ \frac{r_{0}+r_{1}}{2}\right\} \times S^{1}$
and has less vertical perimeter. This example can be generalized to
any interval and any type of vertical fiber -- just take two half
spaces of the vertical fiber at radii as chosen above and join them
along the set of horizontal fibers over their border.\end{rem}
\begin{lem}
\label{lem:Model-MinVertAllRegions}Let $L$ be a Riemannian manifold
of dimension $n-1$ with metric $dl^{2}$ giving finite total measure
on $L$ and let $Z$ be the interval $(A,B),\; A,B\in\mathbb{R}\cup\pm\{\infty\}$
with the usual metric $dr^{2}$. Consider the product $Z\times L$
with volume density 1 and continuous surface density $\Psi(r)$ with
$\Psi$ convex.
\begin{enumerate}
\item If $Z=(0,\infty)$ and $\lim_{r\rightarrow0}\Psi(r)=0$ then fibers
$\{r\}\times L$ minimize vertical surface area among hypersurfaces
bounding the same volume. 
\item If $Z=(-1,1)$, $\lim_{r\rightarrow-1}\Psi(r)=0$ and $\int_{0}^{1}\Psi(r)=\infty$,
then fibers $\{r\}\times L$ for $r\leq0$ minimize vertical surface
area among hypersurfaces of finite surface area bounding the same
volume.
\item If $Z=(-\infty,\infty)$ and $\lim_{r\rightarrow-\infty}\Psi(r)>0$
and $\lim_{r\rightarrow\infty}\Psi(r)>0$ then fibers $\{r\}\times L$
minimize vertical surface area among hypersurfaces with which they
bound net volume zero. 
\end{enumerate}
\end{lem}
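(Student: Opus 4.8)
The plan is to reduce all three cases to Lemma \ref{lem:Model-MainLemma}. I would record first a preliminary observation: a convex, strictly positive surface density $\Psi$ with $\lim\Psi=0$ at one endpoint of $Z$ is automatically non-decreasing on all of $Z$, since extrapolating the chord through two points at which $\Psi$ strictly decreased would force a strictly positive limit at that endpoint; in particular such a $\Psi$ does not tend to $0$ at the opposite endpoint, so the weakened form of Lemma \ref{lem:Model-MainLemma} is available there.

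Case (3) is then immediate. If $H$ and $\{r_0\}\times L$ bound a region of net volume zero, its positive and negative parts have finite, equal volume, so by Fubini almost every horizontal fibre meets each part in a set of finite measure, hence in no neighbourhood of $-\infty$ or $+\infty$. Since by hypothesis $\Psi$ has positive limits at both ends, the weakened hypotheses of Lemma \ref{lem:Model-MainLemma} hold and the conclusion follows.

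For case (1), let $H$ bound a region $\Omega'$ of the prescribed volume $V_0=r_0|L|$. As $\mathrm{vol}(\Omega')<\infty$, almost every fibre meets $\Omega'$ in a set of finite measure, so $\Omega'\triangle\big((0,r_0)\times L\big)$ fills no neighbourhood of $B=\infty$ in almost every fibre; this suffices at $B$ since $\Psi$ is non-decreasing. Near $A=0$, where $\Psi\to0$ and genuine boundedness away from the endpoint is needed, I would first modify $\Omega'$ to $\Omega'_\epsilon=\Omega'\cup\big((0,\epsilon)\times L\big)$: then $\Omega'_\epsilon\triangle\big((0,r_\epsilon)\times L\big)$, with $r_\epsilon|L|=\mathrm{vol}(\Omega'_\epsilon)\in[r_0,r_0+\epsilon]$, is bounded away from $0$ in every fibre, while $|\partial\Omega'_\epsilon|_{Vert}\le|H|_{Vert}+\Psi(\epsilon)|L|$. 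Lemma \ref{lem:Model-MainLemma} applied to $\Omega'_\epsilon$ gives $|H|_{Vert}\ge\Psi(r_\epsilon)|L|-\Psi(\epsilon)|L|\ge\Psi(r_0)|L|-\Psi(\epsilon)|L|$, and letting $\epsilon\to0$ completes the case.

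Case (2) runs along the same lines, but the endpoint $B=1$ is where I expect the real difficulty. The fill-in modification at $A=-1$ is as in case (1). At $B=1$ there is a dichotomy: either $\Omega'$ is, off a null set of fibres, bounded away from $B$, in which case the case-(1) argument gives $|H|_{Vert}\ge\Psi(r_0)|L|$; or $\Omega'$ fills a neighbourhood of $B$ in almost every fibre, in which case I would pass to the complement $(\Omega')^{c}=(Z\times L)\setminus\Omega'$, which is bounded away from $B$, has boundary $H$, and has volume $(1-r_0)|L|=\mathrm{vol}\big((-1,-r_0)\times L\big)$; running the case-(1) argument for $(\Omega')^{c}$ (now filling in near $A=-1$) then yields $|H|_{Vert}\ge\Psi(-r_0)|L|\ge\Psi(r_0)|L|$, the last step using $r_0\le0$ and monotonicity of $\Psi$ — which is exactly why the conclusion is restricted to $r_0\le0$. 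The hard part will be excluding the intermediate possibility, that $\Omega'$ fills a neighbourhood of $B$ on a set of fibres of positive but not full measure. This is where finiteness of the surface area $|H|$, rather than merely of $|H|_{Vert}$, must be used: such a region has to contain a ``vertical wall'' ascending to $B=1$ over a positive-measure part of $L$, and since $\int_0^1\Psi=\infty$ this wall has infinite surface area. Making this precise — bounding $|H|$ below via the coarea formula for the $r$-coordinate on $\partial^{*}\Omega'$ by $\int^{1}\Psi(\rho)\,\mathcal H^{n-2}\big(\partial^{*}\Omega'\cap\{r=\rho\}\big)\,d\rho$, and applying a relative isoperimetric inequality on the compact manifold $L$ to keep the integrand bounded away from $0$ as $\rho\to1$ — is the step I anticipate requiring the most care.
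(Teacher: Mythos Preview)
Your proposal is correct and follows essentially the same route as the paper. Case (3) is identical. In case (1) you argue directly via the fill-in $\Omega'_\epsilon$ and let $\epsilon\to0$, whereas the paper argues by contradiction, but the underlying mechanism---truncating near the degenerate endpoint and invoking the weakened form of Lemma~\ref{lem:Model-MainLemma} at the other---is the same. In case (2) your trichotomy (bounded away / fills a neighbourhood / intermediate) is, under the standing assumption $|H|<\infty$ (hence $|H|_{Vert}<\infty$), exactly the paper's dichotomy: since almost every fibre must lie in one of the two extreme classes, your intermediate case is precisely ``both $\Omega'$ and $(\Omega')^c$ fill a neighbourhood of $B$ on sets of positive measure,'' and the paper excludes it by the same mechanism you anticipate---slicing at levels $r$ near $1$, using an isoperimetric inequality on $L$ to bound the $(n-2)$-measure of $\partial(\{r\}\times L\cap\Omega')$ below, and integrating against $\Psi$ to get infinite horizontal surface area. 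One small caution: you refer to $L$ as compact when invoking the relative isoperimetric inequality, but the hypothesis is only that $L$ has finite total measure; the paper glosses over the same point by asserting existence of isoperimetric regions and continuity of the profile on $L$.
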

\begin{proof}
(1):

Let $R$ be a region with rectifiable boundary with $|R|=|(0,r_{0})\times L|$
and suppose $|\partial R|_{Vert}<|\{r_{0}\}\times L|$. Since there
is infinite volume at infinity and $R$ has finite volume, almost
every fiber of $R$ does not contain an interval around infinity.
We have that $\lim_{r\rightarrow0}|\{r\}\times L|=0$ (from finite
surface area of fibers and the limit of $\Psi$), that $\lim_{r\rightarrow0}|(0,r)\times L|=0$,
and that the surface area $|\{r\}\times L|$ is a continuous function
of the volume $|(0,r)\times L|$, and thus we can take $r$ small
enough so that $|\partial(R\cup(0,r]\times L)|_{Vert}<|\{r'\}\times L|$
where $r'$ is such that $|R\cup(0,r]\times L|=|(0,r')\times L|$.
However, the convexity condition combined with the limit at $0$ implies
that $\Psi$ is non-decreasing and thus does not approach 0 at $\infty$
so we can apply Lemma \ref{lem:Model-MainLemma} to $\partial(R\cup(0,r]\times L)$
to obtain a contradiction.

(2):

We will show that for each region $R$ with rectifiable boundary and
finite surface area there exists a vertical fiber bounding the same
volume and having vertical surface area less than or equal to that
of $R$. The result will then follow because the convexity combined
with the limit and positivity of $\Psi$ imply that $\Psi$ is increasing
and thus the vertical fibers for $r\leq0$ have vertical surface area
strictly less than that of the fibers for $r>0$, and since there
is finite total volume with half the volume bounded by the fiber at
$r=0$, every volume can be bounded by a fiber with $r\leq0$. 

Let $R$ be a region with rectifiable boundary and finite surface
area. Suppose that for both $R$ and $R^{C}$ the sets of points such
that the fibers contain open intervals at infinity has positive measure.
Then for $r$ sufficiently large, the vertical fibers $\{r\}\times L\cap R$
have $n-1$ dimensional $L$-measure contained in a compact real interval
bounded away from 0 and the maximum measure of $L$. Since $L$ has
finite total measure, in this space isoperimetric regions exist for
all volumes, and since the isoperimetric profile is a continuous function,
there is an isoperimetric inequality on $L$ that implies for $r$
sufficiently large, say $r>1-\delta$, $\partial(\{r\}\times L\cap R)$
has $n-2$ dimensional surface area greater than a fixed $\epsilon>0$
viewed as a surface in $L$. In particular, by analyzing the horizontal
component of surface area, we conclude that $|\partial R|\geq\epsilon\int_{1-\delta}^{1}\Psi(r)dr>\infty$. 

Thus for either $R$ or $R^{C}$ almost every fiber does not contain
an open interval at infinity. If it is $R$, we can proceed as in
case \emph{(1)}. If it is $R^{C}$ then since the space has finite
total volume, $R^{C}$ also has finite volume and we can proceed as
in case \emph{(1) }with the region $R^{C}$. However, the same fiber
that bounds volume $|R^{C}|$ at $-1$ bounds volume $|R|$ at $1$,
and since $R$ and $R^{C}$ have the same boundary, we obtain the
conclusion for $R$. 

(3): 

Let $H$ be a rectifiable hypersurface such that $H-\{r\}\times L$
bounds the region $R=R^{+}-R^{-}$ with signed volume 0. In particular,
$R^{+}$ and $R^{-}$ both have finite volume, and since there is
infinite volume at both plus and minus infinity, as in (1) we obtain
that for almost every horizontal fiber $R$ does not contain an open
interval at $-\infty$ or $\infty$. Thus, we can apply Lemma \ref{lem:Model-MainLemma}
to obtain the result. \end{proof}
\begin{rem}
\label{rem:ModelMinimizers-Necessity}In Lemma \ref{lem:Model-MinVertAllRegions}
cases \emph{(1)} and \emph{(2)}, the condition on the limit of the
surface density is necessary -- otherwise, fibers bounding small volumes
will have a surface area bounded below by some constant so that for
sufficiently small volume they cannot be isoperimetric. In case \emph{(2)},
the condition on the integral of the surface density is necessary
to avoid regions bounded by horizontal surfaces $Z\times M$ for some
$n-2$ dimensional submanifold $M\subset L$ which have zero vertical
surface area - it guarantees that these regions have infinite surface
area (this is also why the extra hypothesis on finite surface area
is necessary in the statement). The conditions on the limits in case
\emph{(3) }is used in the proof to rule out certain regions but may
or may not be necessary.
\end{rem}
Using Proposition \ref{pro:ModelVertical}, we can lift Lemma \ref{lem:Model-MinVertAllRegions}
to a much more general setting:
\begin{thm}
\label{thm:GeneralFibersDensityMinimizeTanUnIso}Let $L$ be a Riemannian
manifold of dimension $n-1$ with metric $dl^{2}$ and let $Z$ be
the interval $(A,B),$ $A<0<B$ with standard metric $dr^{2}$. Consider
a warped product $Z\times_{g}L$ ($g$ continuous) with metric $dr{}^{2}+g(r)^{2}dl^{2}$
and continuous product surface density $\Psi_{S}(r)\Phi(l)$ and volume
density $\Psi_{V}(r)\Phi(l)$. Suppose that the surface area of fibers
$\{r\}\times L$ and the signed volume of any annulus $[0,r]\times L,\; A<r<B$
is finite and that the surface area of fibers $\{r\}\times L$ is
a convex function of the signed volume of the annulus $[0,r]\times L$.
\begin{enumerate}
\item If there is infinite total volume, $(A,0]\times L$ has finite volume,
and 
\[
\lim_{r\rightarrow A}|\{r\}\times L\}|=0
\]
 then fibers $\{r\}\times L$ minimize vertical surface area among
hypersurfaces bounding the same volume and thus are uniquely isoperimetric
for all volumes.
\item If there is finite total volume $V_{0}$, 
\[
\lim_{r\rightarrow A}|\{r\}\times L|=0,
\]
and 
\[
\int_{0}^{B}\Psi_{S}(r)g(r)^{n-2}dr=\infty
\]
then fibers $\{r\}\times L$ such that $|(A,r)\times L|\leq V_{0}/2$
minimize vertical surface area among hypersurfaces bounding the same
volume and having finite surface area and thus are uniquely isoperimetric
for all volumes.
\item If both $(A,0]\times L$ and $[0,B)\times L$ have infinite volume
and both 
\[
\lim_{r\rightarrow A}|\{r\}\times L|>0
\]
and 
\[
\lim_{r\rightarrow B}|\{r\}\times L|>0
\]
then fibers $\{r\}\times L$ minimize vertical surface area among
rectifiable hypersurfaces with which they bound net volume 0 and thus
uniquely minimize surface area among such surfaces. 
\end{enumerate}
\end{thm}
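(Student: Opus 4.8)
The plan is to reduce all three cases to Lemma~\ref{lem:Model-MinVertAllRegions} via the model space of Proposition~\ref{pro:ModelVertical}. Because here the surface and volume densities are $\Psi_S(r)\Phi(l)$ and $\Psi_V(r)\Phi(l)$ we have $\Phi_S^L=\Phi_V^L=\Phi$, so the $L$-factor $\Phi_S^L/\Phi_V^L$ of the model surface density is identically $1$; hence the $C^1$ diffeomorphism $(r,l)\mapsto(s(r),l)$ of Proposition~\ref{pro:ModelVertical}, with $s(r)=\int_A^r\Psi_V(t)g(t)^{n-1}\,dt$, carries $Z\times_g L$ onto a genuine product $\tilde Z\times\tilde L$ with volume density $1$ and surface density $\Psi(s)=\Psi_S(r(s))g(r(s))^{n-1}$ a function of $s$ alone, preserving volume and vertical surface area. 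Here $\tilde L$ has finite positive total measure $\int_L\Phi\,dL$, finite precisely because the fibers have finite surface area $\Psi_S(r)g(r)^{n-1}\int_L\Phi\,dL$ with $\Psi_S(r)g(r)^{n-1}>0$. A competitor $H$ bounding a prescribed volume (or net volume $0$ with a fiber) then maps to a competitor of equal vertical surface area bounding the same volume with the image fiber $\{s(r_0)\}\times\tilde L$, so it suffices to prove the statements in $\tilde Z\times\tilde L$.

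Next I would translate the hypotheses. The signed volume of $[0,r]\times L$ equals $s(r)|\tilde L|$ and the surface area of $\{r\}\times L$ equals $\Psi(s(r))|\tilde L|$, so, $|\tilde L|$ being a positive constant, the hypothesis that fiber surface area be a convex function of annulus volume is exactly the convexity of $\Psi$ demanded by Lemma~\ref{lem:Model-MinVertAllRegions}; likewise $\lim_{r\to A}|\{r\}\times L|=0$ becomes $\lim_{s\to s(A)}\Psi=0$, finite total volume becomes $s(A),s(B)$ both finite, infinite total volume becomes $s(B)=\infty$, and positivity of $\lim_{r\to A,B}|\{r\}\times L|$ becomes positivity of $\lim_{s\to s(A),s(B)}\Psi$. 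After an affine reparametrisation of $\tilde Z$ (which only rescales $\Psi$ by a positive constant, preserving convexity) cases (1) and (3) land precisely in cases (1) and (3) of Lemma~\ref{lem:Model-MinVertAllRegions}: in (1) one gets $\tilde Z=(0,\infty)$ with $\Psi\to0$ at $0$ (the finiteness of $(A,0]\times L$ forcing $s(A)$ finite, whence infinite total volume forces $s(B)=\infty$), in (3) one gets $\tilde Z=\mathbb R$ with $\Psi$ bounded below by a positive constant at $\pm\infty$, and net-volume-zero competitors go to net-volume-zero competitors since volume is preserved. Pulling the Lemma back gives vertical minimisation; the fiber, having vertical surface area equal to its surface area, is then isoperimetric, and any isoperimetric region $R$ satisfies $|\partial R|_{Vert}\le|\partial R|=|\{r_0\}\times L|\le|\partial R|_{Vert}$, forcing $|\partial R|_{Vert}=|\partial R|$ and hence, by Definition~\ref{def:VerticalSurfaceArea}, $\partial R$ a union of fibers; since $\Psi$ convex with $\lim\Psi=0$ at the left end forces $\Psi$ strictly increasing (cases (1),(2)), a short convexity computation rules out all such unions other than the single annulus anchored at $A$ (resp., in case (3), the fiber $\{r\}\times L$ itself), giving unique isoperimetry.

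The step I expect to demand real work is case (2), because the map of Proposition~\ref{pro:ModelVertical} preserves vertical surface area but \emph{not} total surface area, so the hypothesis $\int_0^B\Psi_S(r)g(r)^{n-2}\,dr=\infty$ does not pass directly to the hypothesis $\int\Psi(s)\,ds=\infty$ of Lemma~\ref{lem:Model-MinVertAllRegions}(2); a change of variables turns $\int\Psi(s)\,ds$ into $\int\Psi_S\Psi_V g^{2n-2}\,dr$, a genuinely different quantity. I would therefore carry out in the warped product itself the one place where that integral is used in the proof of Lemma~\ref{lem:Model-MinVertAllRegions}(2): using an isoperimetric inequality for $\Phi$-weighted perimeter of $\Phi$-weighted volume on the finite-measure manifold $(L,\Phi)$ (available, as in the proof of Lemma~\ref{lem:Model-MinVertAllRegions}, because its isoperimetric profile is continuous), a region $R$ of finite surface area cannot have both a positive-measure set of fibers on which $R$ contains an interval about $B$ and a positive-measure set on which $R^C$ does, for otherwise the horizontal part of $|\partial R|$ or $|\partial R^C|$ is at least a positive constant times $\int_{B-\delta}^B\Psi_S(r)g(r)^{n-2}\,dr=\infty$ --- and $g^{n-2}$ is the right exponent here precisely because it is the scaling of the induced measure on an $(n-2)$-dimensional horizontal surface. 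Granted this, for $R$ or for $R^C$ almost every fiber omits an interval about $B$; capping that region by $(A,r]\times L$ with $r$ near $A$ gives a region that still omits an interval about $B$ a.e., agrees with its comparison annulus $(A,r')\times L$ near $A$ (so the bounded region is away from $A$), and has vertical surface area exceeding the competitor's by less than any prescribed $\varepsilon$; transporting to the model product and applying Lemma~\ref{lem:Model-MainLemma} in its weakened form (legitimate since $\Psi$ is strictly increasing, hence bounded below near $B$) gives the desired inequality, and passing between $R$ and $R^C$ --- same boundary, complementary volumes, finite total volume --- yields the statement, with $\Psi$ strictly increasing pinning the minimisers to the fibers with $|(A,r)\times L|\le V_0/2$ and giving uniqueness.
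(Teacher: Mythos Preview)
Your proposal is correct and follows essentially the same approach as the paper: reduce via Proposition~\ref{pro:ModelVertical} to the model product, observe that the shared factor $\Phi$ makes the model surface density a function of $s$ alone so that convexity of fiber area in annulus volume becomes convexity of $\Psi$, handle cases (1) and (3) by direct translation to Lemma~\ref{lem:Model-MinVertAllRegions}, and in case (2) redo the finite-surface-area argument in the warped product itself because the integral hypothesis $\int_0^B\Psi_S g^{n-2}\,dr=\infty$ does not survive the change of variables. Your uniqueness argument (equality forces a union of fibers, then monotonicity or convexity rules out multiple fibers) is also the paper's; the only cosmetic slip is writing $s(r)=\int_A^r$ in all cases, which diverges in case (3), but your subsequent affine reparametrisation makes clear you understand the base point is immaterial.
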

\begin{rem*}
By reflecting the interval we see statements 1 and 2 also hold reversing
the roles of $A$ and $B$. \end{rem*}
\begin{proof}
After applying Proposition \ref{pro:ModelVertical} and possibly shifting
or scaling the interval, we reduce to the corresponding cases of Lemma
\ref{lem:Model-MinVertAllRegions}: The component of the density depending
on $l$ is merged completely into the metric and, following the remark
after Proposition \ref{pro:ModelVertical}, the convexity of the surface
density on the resulting space is equivalent to the convexity of surface
area of fibers as a function of the volume of annuli. 

For \emph{(1)} and \emph{(3)}, the conditions on the limit translate
directly. 

For \emph{(2)}, the condition on the limit translates directly, however,
the condition on the integrals is not the same. This reflects the
fact that while vertical surface area is preserved in Proposition
\ref{pro:ModelVertical}, surface area is not, and so the regions
of finite surface area are not necessarily the same. However, the
condition given allows us to deduce by essentially the same argument
as in the proof of Lemma \ref{lem:Model-MinVertAllRegions}-\emph{(2)
}that regions of finite surface area contain open intervals about
$B$ only in a set of horizontal fibers of measure 0 where the measure
on $L$ is induced by the volume element on $L$, $\Phi(l)dL$, which
is the same as the measure on $\tilde{L}$ induced by $\tilde{dl}$.
After this, the rest of the proof goes through unchanged. 

That vertical fibers are uniquely isoperimetric follows because surface
area is greater than vertical surface area with equality only when
the surface is a union of vertical fibers. In cases \emph{(1)} and
\emph{(2)} the surface area of fibers is increasing so that multiple
fibers are always worse than a fiber of small radius bounding the
same volume, and in case \emph{(3)} multiple fibers are always worse
by convexity because for multiple fibers to bound net volume zero
with a given fiber when there is infinite volume at both the origin
and infinity some of them must lie on both sides of the given fiber. 
\end{proof}
It is possible to generalize Theorem \ref{thm:GeneralFibersDensityMinimizeTanUnIso}
to the case where only certain fibers minimize vertical surface area:
\begin{thm}
\label{thm:SingleFiberGeneralMinimizeTanIso}Let $L$ be a Riemannian
manifold of dimension $n-1$ with metric $dl^{2}$ and let $Z$ be
the interval $(A,B),$ $A<0<B$ with standard metric $dr^{2}$. Consider
a warped product $Z\times_{g}L$ ($g$ continuous) with metric $dr{}^{2}+g(r)^{2}dl^{2}$
and continuous product surface density $\Psi_{S}(r)\Phi(l)$ and volume
density $\Psi_{V}(r)\Phi(l)$. Suppose that the surface area of fibers
$\{r\}\times L$ and the signed volume of any annulus $[0,r]\times L,\; A<r<B$
is finite.
\begin{enumerate}
\item Suppose there is infinite total volume and $(A,0]\times L$ has finite
volume. Let $F$ be the function that sends a volume $V$ to the surface
area of the unique vertical fiber $\{r\}\times L$ such that $|(A,r]\times L|=V$.
If there is a positive function $\tilde{F}$ and a $V_{0}$ such that
$\tilde{F}\leq F$, $\tilde{F}$ is convex, $\lim_{V\rightarrow0}\tilde{F}(V)=0$
and $\tilde{F}(V_{0})=F(V_{0})$, then the fiber $\{r\}\times L$
such that $|(A,r)\times L|=V_{0}$ minimizes vertical surface area
among surfaces bounding the same volume and thus is uniquely isoperimetric
for volume $V_{0}$. 
\item Suppose both $(A,0]\times L$ and $[0,B)\times L$ have infinite volume.
Let $F$ be the function that sends a signed volume $V$ to the surface
area of the unique vertical fiber $\{r\}\times L$ such that $|[0,r]\times L|=V$.
If there is a positive function $\tilde{F}$ and a $V_{0}$ such that
$\tilde{F}\leq F$, $\tilde{F}$ is convex, $\lim_{V\rightarrow-|(A,0]\times L|}\tilde{F}(V)>0$,
$\lim_{V\rightarrow|[0,B)\times L|}\tilde{F}(V)>0$, and $\tilde{F}(V_{0})=F(V_{0})$
then the fiber $\{r\}\times L$ such that $|(0,r)\times L|=V_{0}$
minimizes vertical surface area among rectifiable hypersurfaces with
which it bounds net volume 0 and thus uniquely minimizes surface area
among such surfaces. 
\end{enumerate}
\end{thm}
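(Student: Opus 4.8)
The plan is to run the argument of Theorem~\ref{thm:GeneralFibersDensityMinimizeTanUnIso} but, since $F$ itself is no longer assumed convex, to replace the surface density produced by Proposition~\ref{pro:ModelVertical} by a strictly smaller \emph{convex} comparison density built from $\tilde F$. First I would apply Proposition~\ref{pro:ModelVertical}, taking base point $a=A$ in case (1) and $a=0$ in case (2), to pass to a model product $\tilde Z\times\tilde L$ with volume density $1$ and surface density $\Psi(s)$; since fibers have finite surface area and annuli finite volume, the remark after that proposition identifies $s$ with a positive constant times the (signed) volume of the annulus it cuts off and $\Psi(s)$ with a positive constant times the surface area of the corresponding fiber. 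Because the densities are products with the same $L$-factor $\Phi$, the factor $\Phi^L_S/\Phi^L_V$ is $1$, so we land exactly in the setting of Lemma~\ref{lem:Model-MinVertAllRegions}, and in these coordinates $\Psi$, read as a function of bounded volume, is an affine rescaling of $F$ --- in particular not convex in general. In case (1) the model interval is $(0,\infty)$; in case (2) it is $(-\infty,\infty)$.

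Next I would put on the same underlying product the comparison surface density $\tilde\Psi$ defined so that, as a function of bounded volume, it is the corresponding rescaling of $\tilde F$. Convexity of $\tilde F$ makes $\tilde\Psi$ convex, and $\tilde F\le F$ gives $\tilde\Psi\le\Psi$ pointwise; hence for every rectifiable hypersurface $H$ in the original space, with image $\tilde H$ in the model, the change of variables preserves the bounded volume and yields $|H|_{Vert}=|\tilde H|_{Vert,\Psi}\ge|\tilde H|_{Vert,\tilde\Psi}$. One checks that $\tilde\Psi$ is finite, continuous and strictly positive on the whole model interval (convexity together with the one-sided behaviour of $\tilde F$ forces this), so it is a genuine surface density, and that the hypotheses on $\tilde F$ translate into the hypotheses of the relevant case of Lemma~\ref{lem:Model-MinVertAllRegions}: $\lim_{V\to0}\tilde F(V)=0$ becomes $\lim_{s\to0}\tilde\Psi(s)=0$ in case (1), while $\lim_{V\to-|(A,0]\times L|}\tilde F(V)>0$ and $\lim_{V\to|[0,B)\times L|}\tilde F(V)>0$ become the positivity of both end-limits of $\tilde\Psi$ in case (2).

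Applying Lemma~\ref{lem:Model-MinVertAllRegions} --- case (1) for statement (1) of the theorem, case (3) for statement (2) --- in the model space equipped with $\tilde\Psi$, the fiber over the volume level $V_0$ minimizes $\tilde\Psi$-vertical surface area among hypersurfaces bounding the same volume (resp.\ among those with which it bounds net volume $0$). Since $\tilde F(V_0)=F(V_0)$, the vertical surface area of this model fiber equals, after undoing the rescaling, the vertical surface area $|\{r\}\times L|_{Vert}=|\{r\}\times L|$ of the designated fiber in the original space, so chaining the inequalities of the previous paragraph gives $|H|_{Vert}\ge F(V_0)=|\{r\}\times L|$ for every competitor $H$. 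For the uniqueness statements I would use $|H|\ge|H|_{Vert}$, which is strict unless $H$ is a union of vertical fibers, and then rule out unions of two or more fibers exactly as in the proof of Theorem~\ref{thm:GeneralFibersDensityMinimizeTanUnIso}, now applied with the convex minorant $\tilde F$ in place of $F$: in case (1) a union of at least two fibers bounding volume $V_0$ involves a fiber over a volume strictly larger than $V_0$, so (using that $\tilde F$ is nondecreasing, being convex with $\lim_{V\to0}\tilde F=0$) its $\tilde\Psi$-vertical surface area strictly exceeds $F(V_0)$; in case (2) bounding net volume $0$ with infinite volume at both ends forces fibers on both sides of the given one, where convexity of $\tilde F$ again makes the sum strictly larger than $\tilde F(V_0)$.

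The step I expect to be the main obstacle is not conceptual but the careful bookkeeping across Proposition~\ref{pro:ModelVertical}: verifying that $\tilde\Psi$ remains a positive continuous density on all of $\tilde Z$ rather than only near the level $V_0$, that the affine rescaling constants relating $\Psi$, $\tilde\Psi$, $F$, $\tilde F$ and the total $\tilde L$-measure genuinely cancel, and that the competitors which would invalidate Lemma~\ref{lem:Model-MinVertAllRegions} --- hypersurfaces of infinite vertical surface area, or regions containing open intervals at an end of $Z$ --- are excluded by the finiteness of the relevant one-sided volumes exactly as in the proof of that lemma and of Theorem~\ref{thm:GeneralFibersDensityMinimizeTanUnIso}.
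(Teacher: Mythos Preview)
Your approach is correct and is essentially the same as the paper's: build a comparison space whose fiber surface-area function is $\tilde F$, observe that $\tilde F\le F$ makes every competitor's vertical surface area smaller in the comparison space while $\tilde F(V_0)=F(V_0)$ leaves the designated fiber unchanged, and then invoke the already-established result for convex fiber profiles. The only difference is packaging: the paper changes the $r$-component of the surface density \emph{in the original warped product} and then cites Theorem~\ref{thm:GeneralFibersDensityMinimizeTanUnIso} as a black box (which internally performs the passage to the model via Proposition~\ref{pro:ModelVertical}), whereas you first pass to the model space and then change the density there, applying Lemma~\ref{lem:Model-MinVertAllRegions} directly. Your route is slightly longer but more transparent about the mechanics; the paper's route is a two-line reduction. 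One small remark: you cannot literally take base point $a=A$ in Proposition~\ref{pro:ModelVertical} since $a$ must lie in the open interval, but with finite volume on $(A,0]$ a translation of the $s$-coordinate achieves the same effect, which is exactly the ``shifting or scaling the interval'' the paper invokes in the proof of Theorem~\ref{thm:GeneralFibersDensityMinimizeTanUnIso}.
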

\begin{rem*}
It is possible to give a version of Theorem \ref{thm:SingleFiberGeneralMinimizeTanIso}
in the finite volume case (case \emph{(2) }of Theorem \ref{thm:GeneralFibersDensityMinimizeTanUnIso}),
however the condition on the surface area density makes the hypothesis
more complicated to state and we have found no interesting examples
where it applies, and thus we omit it here. \end{rem*}
\begin{proof}
We can replace the $r$-component of the surface density with a new
$r$-component so that the surface area of vertical fibers is now
given by $\tilde{F}$. By our hypothesis on $\tilde{F}$ we can apply
Theorem \ref{thm:GeneralFibersDensityMinimizeTanUnIso} to this new
space. Since $\tilde{F}\leq F$, the new surface density is everywhere
less than or equal to the original surface density and so the surface
area (resp. vertical surface area) of any region considered as a region
in this new space is less than or equal to its surface area (resp.
vertical surface area) as a region in the original space. Since $\tilde{F}(V_{0})=F(V_{0})$,
the fiber in question has the same surface area and vertical surface
area in both spaces. Volume is the same for all regions in both spaces,
and thus this fiber also satisfies the conclusions of Theorem \ref{thm:GeneralFibersDensityMinimizeTanUnIso}
in the original space.
\end{proof}
We deduce the following useful corollary:
\begin{cor}
\label{cor:LargeVerticalFibers}Let $L$ be a Riemannian manifold
of dimension $n-1$ with metric $dl^{2}$ and let $Z$ be the interval
$(0,B)$ with standard metric $dr^{2}$. Consider a warped product
$Z\times_{g}L$ ($g$ continuous) with metric $dr{}^{2}+g(r)^{2}dl^{2}$
and continuous product surface density $\Psi_{S}(r)\Phi(l)$ and volume
density $\Psi_{V}(r)\Phi(l)$. Suppose that the surface area of fibers
$\{r\}\times L$ and the volume of annuli $(0,r)\times L,\;0<r<B$
are finite, and that there is infinite total volume. Let $F$ be the
function that sends a volume $V$ to the surface area of the unique
vertical fiber $\{r\}\times L$ such that $|(0,r]\times L|=V$. If
$F$ is eventually convex, $F$ is bounded below by a line through
the origin of positive slope, and $\lim_{V\rightarrow\infty}F'(V)=\infty$
then for sufficiently large $r$, vertical fibers $\{r\}\times L$
are isoperimetric. \end{cor}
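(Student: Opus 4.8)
The plan is to reduce Corollary \ref{cor:LargeVerticalFibers} to Theorem \ref{thm:SingleFiberGeneralMinimizeTanIso}(1) applied at a suitably large volume $V_0$. Theorem \ref{thm:SingleFiberGeneralMinimizeTanIso}(1) requires, for the fixed volume $V_0$, a convex minorant $\tilde F \le F$ with $\lim_{V\to 0}\tilde F(V)=0$ and $\tilde F(V_0)=F(V_0)$; so the entire task is to manufacture, for all sufficiently large $V_0$, such a function from the three hypotheses on $F$ (eventual convexity, bounded below by a line $\ell(V)=cV$ with $c>0$ through the origin, and $F'(V)\to\infty$).

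First I would fix the threshold $V_1$ beyond which $F$ is convex, and fix a line $\ell(V)=cV$ with $0<c$ and $\ell\le F$ everywhere. Now take $V_0$ large — in particular $V_0>V_1$ and large enough that $F'(V_0)>c$ (possible since $F'\to\infty$; note $F$ is convex past $V_1$ so $F'$ is monotone there and the one-sided derivative is well defined). Consider the supporting line $T$ to the convex function $F|_{[V_1,\infty)}$ at $V_0$: $T(V)=F(V_0)+F'(V_0)(V-V_0)$. The key inequality to check is that $T$ passes \emph{below} the origin, i.e. $T(0)=F(V_0)-F'(V_0)V_0<0$; equivalently $F(V_0)/V_0<F'(V_0)$. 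Since $F(V_0)\le$ (value at $V_0$ of the chord of $F|_{[V_1,\infty)}$ from some fixed left endpoint) this can be arranged by choosing $V_0$ large, using convexity to compare the secant slope with the tangent slope $F'(V_0)$; this is the one genuinely quantitative point and I expect it to be the main obstacle — one must combine eventual convexity with $F'\to\infty$ to force the secant-from-a-fixed-point slope to be strictly dominated by $F'(V_0)$ for large $V_0$. (A clean way: for $W$ a fixed point in $[V_1,\infty)$, convexity gives $F(V_0)\le F(W)+F'(V_0)(V_0-W)$, hence $F(V_0)/V_0\le F(W)/V_0+F'(V_0)(1-W/V_0)$, and since $F'(V_0)\to\infty$ the right side is eventually less than $F'(V_0)$.)

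Once $T(0)<0$, define $\tilde F$ by
\[
\tilde F(V)=\max\{\,0,\; T(V)\,\}
\]
for $V\ge 0$. Then $\tilde F$ is a maximum of two convex (affine) functions, hence convex; $\tilde F\ge 0$ and since $T(0)<0$ there is a threshold $V_*\in(0,V_0)$ with $\tilde F(V)=0$ for $V\le V_*$, so in particular $\lim_{V\to 0}\tilde F(V)=0$. We have $\tilde F(V_0)=T(V_0)=F(V_0)$ since $T(V_0)=F(V_0)>0$. It remains to verify $\tilde F\le F$ on all of $[0,\infty)$: on $[V_1,\infty)$ the tangent line $T$ lies below the convex $F$, so $T\le F$ there and a fortiori $\tilde F\le F$; on $[0,V_1]$ we use that $F\ge\ell\ge 0$ and that $T$ is increasing with $T(0)<0$, so $T(V)\le T(V_1)=F(V_1)$ — but we actually need $\tilde F\le F$ pointwise on $[0,V_1]$, which I would get by noting $\tilde F(V)=\max\{0,T(V)\}$ and checking $T(V)\le \ell(V)\le F(V)$ on this compact interval (if $T(V_1)\le\ell(V_1)$, i.e. if the slope comparison $F'(V_0)$ versus $c$ is chosen so the tangent line stays under the line $\ell$ on $[0,V_1]$; since $T(0)<0=\ell(0)$ and we may further enlarge $V_0$ so that $T$ has small enough slope relative to where $\ell$ sits — or more simply, replace $T$ by $\max\{0,\min\{T,\ell\}\}$ if needed), and where $T<0$ we have $\tilde F=0\le F$.

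Finally, apply Theorem \ref{thm:SingleFiberGeneralMinimizeTanIso}(1) with this $\tilde F$ and this $V_0$: the hypotheses there ($(0,B)\times L$ finite volume, infinite total volume, finite fiber areas) are exactly the standing hypotheses here (with $A=0$, $(A,0]\times L$ trivial of volume $0$, so $\lim_{V\to 0}\tilde F(V)=0$ is the relevant limit condition), and we conclude that the fiber $\{r\}\times L$ bounding volume $V_0$ is uniquely isoperimetric. Since this works for every sufficiently large $V_0$, every sufficiently large vertical fiber is isoperimetric, which is the assertion. The remark after Theorem \ref{thm:GeneralFibersDensityMinimizeTanUnIso} (reflection) is not needed; the only care required is the quantitative choice of $V_0$ in the second paragraph and the pointwise bound $\tilde F\le F$ on the initial segment $[0,V_1]$, which is where I'd spend the actual work.
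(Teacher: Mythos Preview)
Your strategy matches the paper's: reduce to Theorem~\ref{thm:SingleFiberGeneralMinimizeTanIso}(1) by constructing a convex minorant $\tilde F$ of $F$ that vanishes at $0$ and touches $F$ at a large $V_0$, built from a tangent line to $F$ in its eventually-convex region. The paper's construction differs in two ways. First, instead of $\max\{0,T\}$ it takes the piecewise function equal to $l_{-1}$ (the given line $cV$ through the origin below $F$) on $[0,V_{-1}]$, then the tangent line $l_0$ on $[V_{-1},V_0]$, then $F$ itself on $[V_0,\infty)$. Using $l_{-1}$ rather than the constant $0$ as the initial piece is immaterial, but splicing in $F$ itself as the final piece means a single $\tilde F$ certifies every volume $\ge V_0$ at once, rather than rebuilding $\tilde F$ for each $V_0$ as you do. Second, the paper packages the choice of $V_0$ as ``there is eventually a tangent line bounding $F$ from below with a non-negative $V$-intercept,'' i.e.\ it asserts directly that $l_0\le F$ globally once $V_0$ is large --- which is exactly the point you flag as needing work on $[0,V_1]$.

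On that point: your suggested fallback $\max\{0,\min\{T,\ell\}\}$ is \emph{not convex} (the minimum of two affine functions is concave, and the concave corner survives the outer $\max$ whenever it sits above $0$), so discard it. The clean resolution, implicit in both arguments, is that for $V_0$ large enough the $V$-intercept $V_*$ of $T$ already exceeds $V_1$: fixing any $W>V_1$, convexity on $[V_1,\infty)$ gives $F(V_0)\le F(W)+F'(V_0)(V_0-W)$, whence
\[
V_*=V_0-\frac{F(V_0)}{F'(V_0)}\;\ge\; W-\frac{F(W)}{F'(V_0)}\;\longrightarrow\; W\;>\;V_1
\]
as $F'(V_0)\to\infty$. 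Once $V_*>V_1$ one has $T\le 0\le F$ on $[0,V_1]$ and $T\le F$ on $[V_1,\infty)$ by convexity, so $\max\{0,T\}\le F$ everywhere with no further case analysis.
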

\begin{proof}
In order to apply Theorem \ref{thm:SingleFiberGeneralMinimizeTanIso},
it suffices to find a convex function $\tilde{F}$ such that $\tilde{F}\leq F$,
$\tilde{F}(V)=F(V)$ for $V$ sufficiently large, and $\lim_{V\rightarrow0}\tilde{F}(V)=0$.
Since $F$ is eventually convex and $\lim_{V\rightarrow\infty}F'(V)=\infty$,
there is a $V_{1}$ such that $F$ is convex for $V\geq V_{1}$ and
the tangent line to the graph of $F$ at $V_{1}$ bounds $F$ from
below and has a non-negative $V$-intercept. Denote by $l_{1}$ this
tangent line and denote by $l_{0}$ a line through the origin bounding
$F$ from below and let $V_{0}$ be the $V$-coordinate of the intersection
of $l_{0}$ and $l_{1}$ (forcibly $V_{0}\leq V_{1}$). Then we define
\[
\tilde{F}(V)=\begin{cases}
l_{0}(V) & \;0<V\leq V_{0}\\
l_{1}(V) & V_{0}\leq V\leq V_{1}\\
F(V) & V_{1}\leq V
\end{cases},
\]
which has the desired properties. 
\end{proof}

\section{$\mathbb{R}^{n}-\{0\}$ with radial density\label{sec:RnWithRadialDensity}}

We apply the results of Section \ref{sec:Vertical-surface-area-Warped-Products}
in the motivating case of $\mathbb{R}^{n}-\{0\}$ considered as the
warped product $(0,\infty)\times_{r}S^{n-1}$ with radial density.
In this context, we refer to vertical surface area as tangential surface
area, following \cite{diazetal}. We note that the results of this
section extend to $(0,\infty)\times_{r}S^{n-1}$ with continuous product
surface density $\Psi_{S}(r)\Phi(\Theta)$ and volume density $\Psi_{V}(r)\Phi(\Theta)$
(note that $\Phi$ must be the same for both the surface and volume
density), as well as to warped products $(0,\infty)\times_{r}K$ for
any compact Riemannian manifold $K$ with densities of the same form. 
\begin{thm}
\label{thm: Isoperimetric Regions - General}Consider $\mathbb{R}^{n}-\{0\}$
with continuous radial surface density $\Psi_{S}$ and continuous
radial volume density $\Psi_{V}$ such that the surface area of a
sphere of radius $r$ is a convex function of the signed volume of
the annulus $[1,r]\times S^{n-1}$.
\begin{enumerate}
\item If there is infinite total volume and either 

\begin{enumerate}
\item there is finite volume at the origin and $\lim_{r\rightarrow0}|\partial B_{r}|=0$
or 
\item there is finite volume at infinity and $\lim_{r\rightarrow\infty}|\partial B_{r}|=0$, 
\end{enumerate}

then spheres about the origin minimize tangential surface area among
hypersurfaces bounding the same volume and thus are uniquely isoperimetric
for all volumes.

\item If there is finite total volume and 
\[
\lim_{r\rightarrow0}|\partial B_{r}|=0\;\mbox{(resp. }\lim_{r\rightarrow\infty}|\partial B_{r}|=0\mbox{),}
\]
then spheres about the origin bounding volume less than or equal to
half the total volume of the space at the origin (resp. at infinity)
minimize tangential surface area among hypersurfaces of finite surface
area bounding the same volume and thus are uniquely isoperimetric
for all volumes.
\item If there infinite volume at both the origin and infinity and both
\[
\lim_{r\rightarrow0}|\partial B_{r}|>0\;\mbox{and }\lim_{r\rightarrow\infty}|\partial B_{r}|>0,
\]
then any sphere about the origin $S$ minimizes vertical surface area
and thus uniquely minimizes surface area among rectifiable hypersurfaces
$H$ bounding net volume zero with $S$. 
\end{enumerate}
\end{thm}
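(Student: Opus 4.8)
The plan is to deduce Theorem \ref{thm: Isoperimetric Regions - General} as essentially a direct specialization of the warped-product results of Section \ref{sec:Vertical-surface-area-Warped-Products}, using the realization $\mathbb{R}^n - \{0\} \cong (0,\infty) \times_r S^{n-1}$. The first step is to set up the correspondence carefully: we view $\mathbb{R}^n - \{0\}$ as the warped product with fiber $L = S^{n-1}$ (with the round metric), warp factor $g(r) = r$, and product densities $\Psi_S(r)\cdot 1$ and $\Psi_V(r)\cdot 1$ (here $\Phi \equiv 1$ on the sphere). The interval is $(0,\infty)$, which after choosing the basepoint $a = 1$ we want to translate so that it has the form $(A,B)$ with $A < 0 < B$ as required by Theorems \ref{thm:GeneralFibersDensityMinimizeTanUnIso} and \ref{thm:SingleFiberGeneralMinimizeTanIso}; but in fact those theorems are invariant under shifting and rescaling the interval, and all that matters is the behavior at the two endpoints $r \to 0$ and $r \to \infty$.

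The second step is to translate hypotheses. The surface area of a sphere $\partial B_r$ in this space is exactly the surface area of the fiber $\{r\} \times S^{n-1}$, and the signed volume of the annulus $[1,r] \times S^{n-1}$ is the signed volume in the warped-product sense; so the hypothesis ``$|\partial B_r|$ is a convex function of the signed volume of $[1,r]\times S^{n-1}$'' is literally the convexity hypothesis of Theorem \ref{thm:GeneralFibersDensityMinimizeTanUnIso}. ``Finite volume at the origin'' means $(0,1]\times S^{n-1}$ has finite volume (the analogue of $(A,0]\times L$), ``infinite total volume'' matches ``infinite total volume,'' and the limit condition $\lim_{r\to 0}|\partial B_r| = 0$ is exactly $\lim_{r\to A}|\{r\}\times L| = 0$. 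With these identifications, case (1a) is Theorem \ref{thm:GeneralFibersDensityMinimizeTanUnIso}(1) applied at the endpoint $r \to 0$; case (1b) is the same theorem applied at the endpoint $r \to \infty$, which is legitimate by the Remark following that theorem allowing one to reflect the interval and swap $A$ and $B$. Case (3) is Theorem \ref{thm:GeneralFibersDensityMinimizeTanUnIso}(3) verbatim once one checks that infinite volume at the origin and at infinity plus the two positive-limit conditions are precisely its hypotheses.

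The third step, and the one requiring a small amount of genuine checking, is case (2): the finite-total-volume case. Here Theorem \ref{thm:GeneralFibersDensityMinimizeTanUnIso}(2) requires the integral condition $\int_0^B \Psi_S(r) g(r)^{n-2}\,dr = \infty$, i.e. $\int_0^\infty \Psi_S(r) r^{n-2}\,dr = \infty$ (integrating toward the endpoint opposite to where the limit vanishes). So I would need to verify that this integral condition is automatically satisfied, or otherwise identify it as part of the hypothesis. The key observation is that the finite-total-volume hypothesis together with $\lim_{r\to 0}|\partial B_r| = 0$ forces a lower bound on $\Psi_S(r) r^{n-2}$ that makes the integral diverge: since the volume $\int_0^\infty \Psi_V(r) r^{n-1}\,dr$ is finite but the convexity of $|\partial B_r| = \omega_{n-1}\Psi_S(r)r^{n-1}$ as a function of volume (together with the vanishing limit, which by convexity forces $|\partial B_r|$ to be nondecreasing in the volume and hence eventually bounded below by a positive constant) controls the ratio, one derives that $\Psi_S(r) r^{n-2} = |\partial B_r|/(\omega_{n-1} r)$ is bounded below by $c/r$ for large $r$, whose integral to infinity diverges. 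This is the step I expect to be the main obstacle — it is not merely a formal translation, and I would want to state it carefully, possibly isolating it as a short lemma about convexity forcing monotonicity. Once (2) is in place, the ``uniquely isoperimetric'' conclusions in all three cases follow exactly as in the proof of Theorem \ref{thm:GeneralFibersDensityMinimizeTanUnIso}: surface area dominates vertical (tangential) surface area with equality only for unions of vertical fibers, monotonicity of $|\partial B_r|$ in cases (1) and (2) rules out multiple spheres, and convexity plus infinite volume on both sides rules them out in case (3).
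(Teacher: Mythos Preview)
Your proposal is correct and follows essentially the same route as the paper: specialize Theorem \ref{thm:GeneralFibersDensityMinimizeTanUnIso} to $(0,\infty)\times_r S^{n-1}$, handle (1b) by reflecting the interval, and for case (2) verify the integral condition $\int_1^\infty \Psi_S(r)r^{n-2}\,dr=\infty$ by observing that convexity together with $\lim_{r\to 0}|\partial B_r|=0$ forces $|\partial B_r|=c\,r^{n-1}\Psi_S(r)$ to be increasing, whence $r^{n-2}\Psi_S(r)\geq \Psi_S(1)/r$ for $r\geq 1$ and the integral diverges. One small cleanup: the finite-total-volume hypothesis plays no role in this divergence argument (only convexity plus the vanishing limit are used), and the integral you need is from the basepoint to the far endpoint, i.e.\ $\int_1^\infty$ rather than $\int_0^\infty$.
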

\begin{proof}
This is just Theorem \ref{thm:GeneralFibersDensityMinimizeTanUnIso}
applied in this setting. To obtain, for example, \emph{(1)-(b)}, we
consider the warped product with the interval reflected about 0. The
only case that needs extra verification is \emph{(2), }for which we
must verify that $\int_{1}^{\infty}\Phi_{S}(r)r^{n-2}dr=\infty$ (the
other case following similarly). However, by the convexity condition
and the fact that $\lim_{r\rightarrow0}|\partial B_{r}|=0$, $|\partial B_{r}|=c\cdot r^{n-1}\Psi_{S}(R)$
must be increasing, and thus 
\[
\int_{1}^{\infty}r^{n-2}\Psi_{S}(r)\geq\int_{1}^{\infty}\frac{1}{r}\Psi_{S}(1)=\infty.
\]

\end{proof}
We obtain the following simplified statement in the case of simple
density:
\begin{thm}
\label{thm:IsoperimetricRegionsSimpleDensity}Consider $\mathbb{R}^{n}-\{0\}$
with continous simple radial density $e^{\phi}$, where $|\partial B{}_{r}|$
is a log-convex function of $r$. 
\begin{enumerate}
\item If there is infinite total volume and finite volume at infinity then
spheres about the origin minimize tangential surface area among hypersurfaces
bounding the same volume and thus complements of balls about the origin
are uniquely isoperimetric for all volumes. 
\item If there is finite total volume then spheres about the origin bounding
volume greater than half the volume of the space at the origin minimize
tangential surface area among hypersurfaces bounding the same volume
and thus are uniquely isoperimetric for all volumes. 
\item If there infinite volume at both the origin and infinity and $\lim_{r\rightarrow\infty}|\partial B_{r}|>0$
then any sphere about the origin $S$ minimizes vertical surface area
and thus uniquely minimizes surface area among rectifiable hypersurfaces
$H$ bounding net volume zero with $S$. 
\end{enumerate}
\end{thm}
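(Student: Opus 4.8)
The plan is to obtain Theorem~\ref{thm:IsoperimetricRegionsSimpleDensity} from Theorem~\ref{thm: Isoperimetric Regions - General} by matching hypotheses. For a simple radial density $e^{\phi}$ on $(0,\infty)\times_{r}S^{n-1}$ one has $|\partial B_{r}|=|S^{n-1}|\,r^{n-1}e^{\phi(r)}$, and the signed volume $V(r)$ of the annulus $[1,r]\times S^{n-1}$ satisfies $V'(r)=|\partial B_{r}|>0$; writing $f(r)=|\partial B_{r}|$, the change of variable $r\mapsto V$ gives $\frac{df}{dV}=\frac{f'(r)}{V'(r)}=\frac{f'(r)}{f(r)}=(\log f)'(r)$. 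Hence $f$ is a convex function of $V$ precisely when $(\log f)'$ is nondecreasing in $V$, equivalently (as $V$ is increasing in $r$) nondecreasing in $r$, equivalently $\log|\partial B_{r}|$ is convex in $r$ --- which is the standing hypothesis of Theorem~\ref{thm:IsoperimetricRegionsSimpleDensity}. (For merely continuous $\phi$ one argues via a smooth convex approximation of $\log f$.) So the convexity hypothesis of Theorem~\ref{thm: Isoperimetric Regions - General} holds automatically, and it remains to produce the boundary-limit hypotheses in each of the three items.

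Next I would extract the behaviour of $|\partial B_{r}|$ at the two ends of $(0,\infty)$ from log-convexity alone. A convex function on the open interval $(0,\infty)$ cannot tend to $-\infty$ at $0^{+}$ (its right difference quotients from a fixed point would blow up), so $\lim_{r\to0}|\partial B_{r}|\in(0,+\infty]$ always; in particular alternative (1)(a) of Theorem~\ref{thm: Isoperimetric Regions - General} never arises for a log-convex simple density, and item (2) there must be used in its reflected form. A convex function is moreover eventually monotone, so $|\partial B_{r}|$ has a limit in $[0,+\infty]$ as $r\to\infty$. In item (1), finite volume at infinity means $\int_{1}^{\infty}|\partial B_{r}|\,dr<\infty$, which forces that limit to be $0$; alternative (1)(b) of Theorem~\ref{thm: Isoperimetric Regions - General} then applies, and since infinite total volume forces infinite volume at the origin, the finite-volume region a sphere bounds is the exterior of a ball --- giving the stated conclusion about complements of balls. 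In item (2), finite total volume gives $\int_{0}^{\infty}|\partial B_{r}|\,dr<\infty$, hence again $\lim_{r\to\infty}|\partial B_{r}|=0$, so we apply Theorem~\ref{thm: Isoperimetric Regions - General}(2) with the roles of $0$ and $\infty$ exchanged; the one extra point, the divergence of $\int_{0}^{1}\Psi_{S}(r)r^{n-2}\,dr$, goes exactly as in the proof of Theorem~\ref{thm: Isoperimetric Regions - General}: log-convexity together with $\lim_{r\to\infty}|\partial B_{r}|=0$ makes $|\partial B_{r}|=|S^{n-1}|r^{n-1}\Psi_{S}(r)$ nonincreasing, whence $r^{n-2}\Psi_{S}(r)\ge\Psi_{S}(1)/r$ for $r\le1$. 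Rephrasing ``volume on the far side at most half the total'' as ``volume at the origin at least half'' gives the assertion, with uniqueness inherited verbatim from Theorem~\ref{thm: Isoperimetric Regions - General}.

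The step I expect to be the genuine obstacle is item (3). Infinite volume at both ends plus log-convexity already gives $\lim_{r\to0}|\partial B_{r}|>0$, and if in addition $\lim_{r\to\infty}|\partial B_{r}|>0$ we are precisely in the setting of Theorem~\ref{thm: Isoperimetric Regions - General}(3). What is not covered verbatim is the remaining case $\lim_{r\to\infty}|\partial B_{r}|=0$ together with infinite volume at infinity, which really does occur --- for instance for the density $r^{-n}$. In that case log-convexity forces $\log|\partial B_{r}|$ to be a \emph{strictly decreasing} convex function on all of $(0,\infty)$, so the surface density $\Psi$ of the model space supplied by Proposition~\ref{pro:ModelVertical} is strictly decreasing on the whole line, and I would then rerun the argument of Lemma~\ref{lem:Model-MainLemma} using this monotonicity in place of the hypothesis that $\Psi$ not vanish at the ends. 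The point is that for a competitor $H$ bounding net volume zero with a fiber, in almost every horizontal fiber the net height $h(l)$ lies at or above the smallest height at which $H$ meets that fiber, where $\Psi$ takes its largest value, so the inequality $\sum_{s\in H}\Psi(s)\ge\Psi(h(l))$ persists and Jensen's inequality applies as before --- the finite-vertical-area hypothesis no longer needs to be supplemented by a ``bounded away from the end'' condition. Once the hypotheses are in place, all three conclusions --- minimization of tangential surface area, hence of surface area, together with uniqueness --- are read off directly from Theorem~\ref{thm: Isoperimetric Regions - General}, with no further geometric input.
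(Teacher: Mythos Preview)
Your treatment of items (1) and (2) is exactly the paper's argument, carried out with more care: the paper simply asserts the convexity conditions are ``easily checked to be equivalent'' and then observes that finite volume at infinity together with log-convexity forces $|\partial B_r|\to 0$ there, so that Theorem~\ref{thm: Isoperimetric Regions - General}(1)(b) and the reflected form of (2) apply. Your computation $df/dV=(\log f)'$ is the natural way to see the equivalence, and your observation that a convex function on $(0,\infty)$ cannot tend to $-\infty$ at $0^+$ makes explicit why only the ``limit at infinity'' versions of Theorem~\ref{thm: Isoperimetric Regions - General} are relevant (the paper notes this in the Remark following Theorem~\ref{thm:IsoperimetricRegionsSimpleDensity}).

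For item (3) you actually go beyond the paper. The paper's proof just says ``apply Theorem~\ref{thm: Isoperimetric Regions - General}'', but Theorem~\ref{thm: Isoperimetric Regions - General}(3) carries the extra hypothesis $\lim_{r\to\infty}|\partial B_r|>0$, which is \emph{not} implied by log-convexity plus infinite volume at infinity---the density $r^{-n}$, which the paper itself invokes in Example~\ref{exa:IsoperimExamples}(3), has $|\partial B_r|=c/r\to 0$. So the paper's proof, read literally, does not cover this case. Your proposed repair is correct: when $\Psi$ is convex and strictly decreasing on the model line, the inequality $\sum_{s\in H\cap\text{fiber}}\Psi(s)\ge\Psi(h(l))$ needed in Lemma~\ref{lem:Model-MainLemma} follows from the \emph{lower} bound $h(l)\ge r_{\min}(H\cap\text{fiber})$ alone, and that lower bound uses only that $R^-$ is bounded away from $A$ in almost every fiber---which one gets from $\Psi\not\to 0$ at $A$ and finite vertical area, exactly as in the last paragraph of the proof of Lemma~\ref{lem:Model-MainLemma}. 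No control near $B$ is needed. (Concretely: $h^-(l)=-|R^-\cap\text{fiber}|\ge r_{\min}(R^-)\in H$, while $h^+(l)\ge 0$; if $R^-$ is empty in the fiber then $\Omega\supset(-\infty,0]$ and one checks directly that $h^+(l)=|\Omega\cap(0,\infty)|\ge r_{\min}(\partial\Omega)$.) Jensen's inequality then finishes as before. So your argument fills a genuine gap that the paper's terse proof leaves open.
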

\begin{rem*}
For a smooth simple density the log-convexity of $|\partial B{}_{r}|$
is equivalent to $\phi''\geq\dfrac{n-1}{r^{2}}$. Thus, for a smooth
simple density the cases of Theorem \ref{thm: Isoperimetric Regions - General}
where $\lim_{r\rightarrow0}|\partial B_{r}|=0$ cannot occur because
this gives a contradiction.\end{rem*}
\begin{proof}
We apply Theorem \ref{thm: Isoperimetric Regions - General}. The
convexity conditions are easily checked to be equivalent. Thus, all
that is left to verify are the limit conditions. In the first two
cases we must show that spheres of large radius have surface area
approaching zero. However, finite volume at infinity gives us that
$|\partial B_{r}|$ is decreasing with $\lim_{r\rightarrow\infty}|\partial B_{r}|=0$
-- indeed, by the log-convexity condition, if $|\partial B_{r}|$
is ever non-decreasing then it is always non-decreasing and thus there
is infinite volume at infinity, and if it is decreasing but not decreasing
to 0 then there is also infinite volume at infinity. In the last case
$\lim_{r\rightarrow0}|\partial B_{r}|>0$ because there is infinite
volume at the origin. 
\end{proof}
Diaz \emph{et al.} \cite[Thm 7.4]{diazetal} used the technique of
tangential surface area to recover a result of Betta et al. on when
spheres about the origin are isoperimetric in the case of surface
density. We restate it here, noting it follows again as a corollary
of the more general Theorem \ref{thm: Isoperimetric Regions - General}.
The modified convexity condition is obtained by subtracting off the
value at the origin and noting that spheres are isoperimetric in $\mathbb{R}^{n}$
with constant density.
\begin{thm}[{Surface density, \cite[Thm 4.3]{bettaEtAl}}]
\label{thm:Isoperim-Surface Area density} In $\mathbb{R}^{n}$ with
non-decreasing radial surface density $\Psi$ such that 
\[
(\Psi(r^{1/n})-\Psi(0))r^{1-1/n}
\]
is convex, spheres about the origin are isoperimetric. 
\end{thm}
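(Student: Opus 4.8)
The plan is to derive this theorem as a corollary of Theorem \ref{thm: Isoperimetric Regions - General} by a splitting trick. We are in the \emph{surface density} case, so $\Psi_V\equiv 1$ and $\Psi_S=\Psi$, and $\mathbb{R}^n$ — which differs from $\mathbb{R}^n-\{0\}$ only by a single point, of zero $\mathcal{H}^{n-1}$-measure and irrelevant to volumes and surface areas of rectifiable competitors — is the warped product $(0,\infty)\times_{r}S^{n-1}$. The point driving everything is that $\Psi$ itself need not make the surface area of spheres a convex function of volume: writing the volume variable as $u=r^n$, that surface area is $\omega_{n-1}\Psi(u^{1/n})u^{1-1/n}$, and the summand $\omega_{n-1}\Psi(0)u^{1-1/n}$ is concave since $1-1/n\in(0,1)$. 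So one first subtracts off the constant part $\Psi(0)$ and handles it separately via the classical Euclidean isoperimetric inequality, which is exactly ``spheres are isoperimetric in $\mathbb{R}^n$ with constant density.''

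Concretely, let $R$ be any region of volume $V_0$ and let $B_{r_0}$ be the ball about the origin of the same Euclidean volume. Since $\Psi$ is non-decreasing we have $\Psi\geq\Psi(0)>0$, so $\Psi-\Psi(0)$ is a non-negative continuous radial function, and we split
\[
|\partial R|_{\Psi}=|\partial R|_{\Psi-\Psi(0)}+\Psi(0)\,|\partial R|,
\]
where $|\cdot|$ is unweighted surface area. The classical isoperimetric inequality bounds the second term, $\Psi(0)|\partial R|\geq\Psi(0)|\partial B_{r_0}|$. For the first term I would use the trivial bound $|\partial R|_{\Psi-\Psi(0)}\geq|\partial R|_{Vert,\,\Psi-\Psi(0)}$ and then apply Theorem \ref{thm: Isoperimetric Regions - General}(1)(a) to $(0,\infty)\times_{r}S^{n-1}$ with surface density $\Psi-\Psi(0)$ and volume density $1$. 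The hypotheses check out: there is infinite total volume, finite volume at the origin, $\lim_{r\to0}|\partial B_r|_{\Psi-\Psi(0)}=\lim_{r\to0}(\Psi(r)-\Psi(0))\omega_{n-1}r^{n-1}=0$, and the required convexity of the surface area of spheres as a function of the signed volume of $[1,r]\times S^{n-1}$ becomes, after the affine reparametrization of the volume variable by $u=r^n$, precisely the convexity of $(\Psi(r^{1/n})-\Psi(0))r^{1-1/n}$, which is the stated hypothesis. Hence spheres about the origin minimize tangential surface area for this density, so $|\partial R|_{Vert,\,\Psi-\Psi(0)}\geq|\partial B_{r_0}|_{Vert,\,\Psi-\Psi(0)}=|\partial B_{r_0}|_{\Psi-\Psi(0)}$, the last equality holding because a sphere about the origin is a single vertical fiber.

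Adding the two estimates gives $|\partial R|_{\Psi}\geq|\partial B_{r_0}|_{\Psi-\Psi(0)}+\Psi(0)|\partial B_{r_0}|=|\partial B_{r_0}|_{\Psi}$, which is the assertion. I do not expect a deep obstacle here; the work is bookkeeping rather than substance. The points that need care are: verifying that $\Psi-\Psi(0)$ genuinely qualifies as a density in Theorem \ref{thm: Isoperimetric Regions - General} — non-negativity, which is exactly where the monotonicity of $\Psi$ is used (the degenerate case $\Psi$ constant reduces to the classical inequality and should be noted); confirming that the change of volume variable $V\mapsto r^n$ is affine so that convexity transfers in the correct direction; and recording that passing between $\mathbb{R}^n$ and $\mathbb{R}^n-\{0\}$ changes nothing. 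The conceptual heart is simply the recognition that the right convex quantity is $\Psi$ minus its value at the origin, with the leftover constant density absorbed by the Euclidean isoperimetric inequality.
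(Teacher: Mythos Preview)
Your proposal is correct and matches the paper's own argument exactly: the paper states only that the result ``follows again as a corollary of the more general Theorem \ref{thm: Isoperimetric Regions - General}. The modified convexity condition is obtained by subtracting off the value at the origin and noting that spheres are isoperimetric in $\mathbb{R}^{n}$ with constant density,'' and you have faithfully expanded this into the splitting $\Psi=(\Psi-\Psi(0))+\Psi(0)$, handled the constant piece by the classical Euclidean inequality, and checked that the remaining piece satisfies the hypotheses of Theorem \ref{thm: Isoperimetric Regions - General}(1)(a) after the affine change of volume variable $V\propto r^{n}$. Your flagged caveat about $\Psi-\Psi(0)$ possibly vanishing is a fair technical point, but the underlying vertical-surface-area lemmas (Lemmas \ref{lem:Model-MainLemma} and \ref{lem:Model-MinVertAllRegions}) only use convexity and the limit condition, not strict positivity, so the argument goes through.
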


\begin{rem}[Volume density]
 In $\mathbb{R}^{n}-\{0\}$ with volume density $e^{\phi},$ the
convexity condition of Theorem \ref{thm: Isoperimetric Regions - General}
becomes $\phi'(r)\leq-1/r$ . However, any decreasing volume density
can be handled with simpler arguments (see \cite{morganBlogLogConvex}). 

\end{rem}
\begin{example}
\label{exa:IsoperimExamples}We present here some applications of
Theorems \ref{thm: Isoperimetric Regions - General} and \ref{thm:IsoperimetricRegionsSimpleDensity}.
\begin{enumerate}
\item Diaz \emph{et al.} \cite[ Prop 7.5]{diazetal} used the technique
of tangential surface area to prove that in $\mathbb{R}^{n}-\{0\}$
with simple density $r^{p}$, $p<-n$, spheres about the origin are
isoperimetric bounding volume at infinity. Using Theorem \ref{thm:IsoperimetricRegionsSimpleDensity}
we can extend this family to $r^{p}e^{\phi(r)}$ with $\phi''\geq0$
and either $\phi'(r)\leq0$ and $p<-n$ or $\phi'(r)\leq-\epsilon<0$
and $p=-n$. Notably, at the origin these densities are neither log-convex
nor smooth. Figure \ref{fig:GraphsFPsi} \emph{(d) }shows the graph
of surface area of spheres as a function of volume for one density
in this family.
\item Theorem \ref{thm:IsoperimetricRegionsSimpleDensity} also allows us
to generalize the first example further to $\mathbb{R}^{n}-\{0\}$
with simple density $r^{p}e^{\phi(r)}$, $-n<p\leq-(n-1)$, $\phi''\geq0$
and $\phi'\leq-\epsilon<0$ (for example $\phi(r)=-ar+b,$ $a>0$),
which gives finite total volume: in these spaces, spheres bounding
half or more of the volume of the space at the origin are isoperimetric. 
\item Theorem \ref{thm:IsoperimetricRegionsSimpleDensity} gives a similar
result in $\mathbb{R}^{n}-\{0\}$ with simple density $r^{-n}$, which
has infinite volume at both the origin and infinity: in this space,
any sphere about the origin $S$ minimizes surface area among all
rectifiable hypersurfaces bounding net volume zero with $S$. 
\item Consider $\mathbb{R}^{n}-\{0\}$ with volume density $\Psi_{V}$ such
that there is finite volume at the origin and infinite volume at infinity
and surface area density $\Psi_{S}(r)=G(|B_{r}|)/r^{n-1}$ where $G$
is convex and approaches zero at the origin. Then the surface area
of balls about the origin as a function of their volume is proportional
to $G$ and thus convex so we can apply Theorem \ref{thm: Isoperimetric Regions - General}
to show balls about the origin are isoperimetric. We can use this
method to construct many specific examples, for instance, $\Psi_{V}=r^{m}$
and $\Psi_{S}=r^{k}$ with $m\geq0$ and $k\geq m+1$. This example
provides a generalization to higher dimensions of part of the result
of Diaz et. al. \cite[Thm. 4.17 and Prop. 4.23]{diazetal} on isoperimetric
regions in sectors with simple density $r^{p}$, which after a conformal
change of coordinates is equivalent to $\mathbb{R}^{2}$ with differing
perimeter and area densities. 
\item Instead of density, we can can consider a conformal change of metric
(which is equivalent to certain differing densities on surface area
and volume): Consider $\mathbb{R}^{2}-\{0\}$ with metric $e^{\phi}ds$
where $ds$ is the standard Euclidean metric and $\phi$ is a function
of $r$. A calculation shows that the perimeter of circles as a function
of their area is convex at the area of the circle of radius $r$ if
and only if 
\[
\phi''(r)\geq\phi'(r)^{2}+\frac{\phi'(r)}{r}+\frac{1}{r^{2}}.
\]
In particular, Theorem \ref{thm: Isoperimetric Regions - General}-(3)
applies with $\phi=-\log r$ which gives the punctured plane with
metric $\frac{1}{r}\cdot ds$, i.e. a cylinder (we can also obtain
this result directly by applying Theorem \ref{thm: Isoperimetric Regions - General}-(3)
to $(-\infty,\infty)\times S^{1}$ with density 1). By adding $\phi'(r)/r$
to each side we can rewrite the convexity condition as
\[
\kappa(r)\geq\left(\phi'(r)+\frac{1}{r}\right)^{2}
\]
where $\kappa$ is the Gaussian curvature at radius $r$. Howards
\emph{et al. \cite[Sec. 9]{HHM-IsoperimetricProblemSurfaces}} give
other results on the isoperimetric problem in $\mathbb{R}^{2}$ with
non-Euclidean metrics, in particular showing that circles are isoperimetric
if the metric is smooth and has curvature that decreases with radius. 
\end{enumerate}
\end{example}

\begin{example}
In $\mathbb{R}^{n}-\{0\}$ with simple density $r^{p}\;-n<p<0$, isoperimetric
regions do not exist \cite[Prop. 7.3]{diazetal}. Although the convexity
condition holds for $-n<p\leq-(n-1)$, Theorem \ref{thm: Isoperimetric Regions - General}
does not apply: there is infinite total volume with finite volume
at the origin but the surface area of small spheres does not go to
0. Indeed, this is always the case for a simple density satisfying
these volume hypotheses. In particular, from non-existence for $p=-(n-1)$,
we see that the condition in Theorem \ref{thm: Isoperimetric Regions - General}-(1)
that the surface area goes to zero cannot be weakened to the condition
that the surface area of small spheres remains bounded (as noted already
in remark \ref{rem:ModelMinimizers-Necessity} for our model spaces). 
\end{example}

\subsection{Large balls in $\mathbb{R}^{n}-\{0\}$}

Using Corollary \ref{cor:LargeVerticalFibers} we generalize Kolesnikov
and Zhdanov's \cite[Prop. 4.7]{kolesnikovRadiallySymmetric} results
on large balls about the origin in $\mathbb{R}^{n}$ with simple density.
In particular, we obtain that in $\mathbb{R}^{n}$ with simple density
non-singular at the origin and exhibiting a type of eventually strict
log-convexity, spheres about the origin bounding large volume are
uniquely isoperimetric (Corollary \ref{cor:LargeBalls-SimpleDensity}).
In Example \ref{exa:LargeBalls} we demonstrate several densities
where Theorem \ref{thm:SingleFiberGeneralMinimizeTanIso}, Corollary
\ref{cor:LargeVerticalFibers} or Corollary \ref{cor:LargeBalls-SimpleDensity}
can be applied. We note that some of these more general results may
be attainable using the divergence theorem methods of Kolesnikov and
Zhdanov, however, as our proof seems to offer a clearer geometric
picture, we have not explored this possibility. 
\begin{cor}
\label{cor:LargeBalls-SimpleDensity}In $\mathbb{R}^{n}$ with continuous
radial density $\Psi=e^{\phi(r)}$, if there exist $r_{0}>0$ and
$\epsilon>0$ such that $\phi$ is twice continuously differentiable
with $\phi''(r)\geq\frac{\epsilon}{r}$ for all $r>r_{0}$, then large
spheres about the origin are uniquely isoperimetric.\end{cor}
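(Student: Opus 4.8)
The plan is to deduce Corollary \ref{cor:LargeBalls-SimpleDensity} from Corollary \ref{cor:LargeVerticalFibers} applied to the warped product $(0,\infty)\times_{r}S^{n-1}$ with simple density $\Psi=e^{\phi(r)}$. The key translation is that the surface area of a sphere of radius $r$ is $|\partial B_r| = n\omega_n\, r^{n-1}e^{\phi(r)}$, which equals $c\,e^{\psi(r)}$ with $\psi(r)=\phi(r)+(n-1)\log r$, and the volume it bounds is $V(r)=n\omega_n\int_0^r t^{n-1}e^{\phi(t)}\,dt$, a strictly increasing function of $r$ with $V(0)=0$. So the function $F$ sending volume $V$ to the surface area of the sphere bounding $V$ is $F(V)=c\,e^{\psi(r(V))}$. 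First I would verify the three hypotheses of Corollary \ref{cor:LargeVerticalFibers}: finite surface area of spheres and finite volume of annuli near the origin (immediate from continuity of $\Psi$ on $[0,\infty)$), and infinite total volume (which I will get from the log-convexity forcing $\psi$, hence $F$, eventually nondecreasing and bounded below).

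The heart of the matter is to check that (i) $F$ is eventually convex, (ii) $F$ is bounded below by a line through the origin of positive slope, and (iii) $F'(V)\to\infty$ as $V\to\infty$. For this I compute derivatives in $r$: $\frac{dV}{dr}=c\,e^{\psi(r)}\cdot\frac{1}{?}$ — more precisely, since $|\partial B_r|$ is the derivative of volume up to the constant from the metric, $V'(r) = |\partial B_r| = F(V(r))$ after normalizing constants; differentiating $F(V(r))=c\,e^{\psi(r)}$ gives $F'(V(r))\,V'(r) = c\,\psi'(r)e^{\psi(r)}$, so $F'(V(r)) = \psi'(r)$ (after absorbing constants consistently). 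Likewise $F''(V(r))\,V'(r) = \psi''(r)$, so $F''(V(r)) = \psi''(r)/V'(r)$. Now $\psi''(r) = \phi''(r) - \frac{n-1}{r^2}$, which is \emph{not} obviously nonnegative from the hypothesis $\phi''\geq \epsilon/r$; however, for $r$ large enough, $\epsilon/r$ dominates $(n-1)/r^2$, so $\psi''(r)\geq \epsilon/r - (n-1)/r^2 > 0$ for $r>r_1$ for some $r_1\geq r_0$. Hence $F$ is eventually convex, giving (i). For (iii), $F'(V(r))=\psi'(r)$ and $\psi'(r) = \psi'(r_1) + \int_{r_1}^r \psi''(t)\,dt \geq \psi'(r_1) + \int_{r_1}^r(\epsilon/t - (n-1)/t^2)\,dt \to \infty$ as $r\to\infty$ (the $\epsilon\log r$ term wins), and since $V(r)\to\infty$, we get $F'(V)\to\infty$; this also gives the eventual monotonicity of $F$ and hence infinite total volume. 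For (ii), once $F$ is eventually convex with $F'\to\infty$ and $F>0$ everywhere (as $e^{\psi}>0$), any tangent line to $F$ at a large point has positive slope and, being below the convex part and below the continuous positive function $F$ on the compact initial interval after possibly lowering it, can be arranged to pass below $F$; more simply, since $F$ is continuous and positive on $(0,\infty)$ with $\liminf_{V\to 0}F(V)\geq 0$ and $F$ eventually growing superlinearly, $F$ is bounded below on any $[\delta,\infty)$ by a positive constant and one can fit a line through the origin of small positive slope under it.

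With all three hypotheses of Corollary \ref{cor:LargeVerticalFibers} verified, that corollary yields that for sufficiently large $r$, the spheres $\{r\}\times S^{n-1}$, i.e. the spheres $\partial B_r$ about the origin, are isoperimetric; the uniqueness comes from the last paragraph of the proof of Theorem \ref{thm:GeneralFibersDensityMinimizeTanUnIso}, since $F$ is eventually increasing so multiple fibers are strictly worse than a single small-radius fiber bounding the same volume, and surface area exceeds vertical surface area except on unions of fibers. I expect the main obstacle to be the bookkeeping around the constants relating $V'(r)$, $F$, and $\Psi$ — getting the clean identities $F'(V(r))=\psi'(r)$ and $F''(V(r))=\psi''(r)/V'(r)$ right — together with the slightly delicate point that $\phi''\geq\epsilon/r$ does \emph{not} directly give convexity of $F$ (one must pass to $\psi=\phi+(n-1)\log r$ and observe that convexity of $F$ in $V$ is equivalent to $\psi''\geq 0$, which holds only eventually), and then confirming that ``eventually convex'' plus $F'\to\infty$ is genuinely all that Corollary \ref{cor:LargeVerticalFibers} needs.
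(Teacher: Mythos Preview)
Your approach is essentially the same as the paper's: reduce to Corollary~\ref{cor:LargeVerticalFibers} by verifying that $F$ is eventually convex, that $F'(V)\to\infty$, and that $F$ lies above a line through the origin of positive slope. Your derivative computations $F'(V(r))=\psi'(r)=\phi'(r)+\tfrac{n-1}{r}$ and $F''(V(r))=\psi''(r)/V'(r)$ with $\psi''(r)=\phi''(r)-\tfrac{n-1}{r^{2}}$ match the paper's exactly, and your arguments for (i) and (iii) are correct.

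There is one genuine gap, in (ii). To fit a line $y=mV$ through the origin under $F$, you need $\inf_{V>0}F(V)/V>0$, and in particular you must control $F(V)/V$ as $V\to 0$. Your justification (``$\liminf_{V\to 0}F(V)\geq 0$'' together with a positive lower bound on $[\delta,\infty)$) does not do this: since $F(V)\to 0$ as $V\to 0$, the ratio $F(V)/V$ could in principle tend to $0$ (think of $F(V)=V^{2}$ near the origin), and then no such line exists. The paper fills this by using the continuity of $\Psi$ at the origin with $0<\Psi(0)<\infty$: for small $r$ the surface area is $\sim c\,r^{n-1}$ and the volume is $\sim c'\,r^{n}$, so $F(V)/V\sim n/r\to\infty$ as $V\to 0$. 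This ``vertical tangent at the origin'' is precisely what guarantees the positive-slope line through the origin exists; once you insert this observation, your argument goes through.
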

\begin{rem*}
In fact, it suffices to have $\phi'$ go to infinity and $\phi''$
eventually greater than $\frac{n-1}{r^{2}}$.\end{rem*}
\begin{proof}
Let $F$ be the function mapping the volume of a ball about the origin
in $\mathbb{R}^{n}$ with simple density $\Psi$ to its surface area.
Since $\Psi$ is continuous and 
\[
0<\Psi(0)<\infty,
\]
there is a positive constant $C$ such that near the origin the surface
area of a ball is greater than $C/r$ times its volume. Thus, $F$
has a vertical asymptote at volume 0 (c.f. Figure \ref{fig:GraphsFPsi}-\emph{(a)}).
Now, for $V$ sufficiently large,
\[
F'(V)=\left(\frac{(n-1)}{r}+\phi'(r)\right)
\]

\[
F''(V)=\left(\phi''(r)-\frac{n-1}{r^{2}}\right)\cdot\frac{|S_{1}|}{|S_{r}|}
\]
where $r$ is the radius of the ball of volume $V$, and thus $F$
is eventually convex. Furthermore since $\phi'$ becomes arbitrarily
large, $\lim_{V\rightarrow\infty}F'(V)=\infty$. Since $F$ is positive
outside of zero, continuous, and eventually increasing, it has an
infimum greater than zero on any interval bounded away from 0, and
since $F$ also has a vertical asymptote at volume 0, there must exist
a line of positive slope through the origin bounding $F$ from below.
Finally, since $\Psi(0)$ being finite implies that there is finite
volume at the origin we can apply Corollary \ref{cor:LargeVerticalFibers}. \end{proof}
\begin{example}
\label{exa:LargeBalls}We give some applications of Theorem \ref{thm:SingleFiberGeneralMinimizeTanIso},
Corollary \ref{cor:LargeVerticalFibers}, and Corollary \ref{cor:LargeBalls-SimpleDensity}:
\begin{enumerate}
\item In $\mathbb{R}^{n}$ with simple density $e^{r^{\alpha}}$, $\alpha>1$,
Corollary \ref{cor:LargeBalls-SimpleDensity} shows that large spheres
about the origin are isoperimetric (originally shown by Kolesnikov
and Zhdanov \cite[Prop. 4.7]{kolesnikovRadiallySymmetric}). 
\item In $\mathbb{R}^{n}$ with simple density $e^{p(r)}$ with $p$ a polynomial
of degree greater than or equal to 2 and positive leading coefficient,
Corollary \ref{cor:LargeBalls-SimpleDensity} shows that large spheres
about the origin are isoperimetric. This include, for example, $\mathbb{R}^{n}$
with simple density $e^{r^{2}-2r+2}$, where for small volumes isoperimetric
regions are approximate balls centered on the unit circle \cite{morganBlogLogConvex,Morgan-PubLogConvexDensity}.
Thus we obtain examples of spaces where spheres about the origin are
stable but only isoperimetric for certain volumes. 
\item In $\mathbb{R}^{3}$ with density $\Psi_{V}=e^{r}$, $\Psi_{S}=e^{r^{8}}$,
large spheres about the origin are isoperimetric: It is easy to see
from the graph of surface area as a function of volume (Figure \ref{fig:GraphsFPsi}
\emph{(c)}) that an appropriate convex function exists and thus we
can apply Theorem \ref{thm:SingleFiberGeneralMinimizeTanIso}. Alternatively,
one can show directly that Corollary \ref{cor:LargeVerticalFibers}
applies.
\item As pointed out by Morgan \cite{Morgan-exprAlpha}, in $\mathbb{R}^{n}$
with simple density $e^{r^{\alpha}}$, $\alpha<0$, small spheres
about the origin minimize tangential surface area among hypersurfaces
with which they bound volume 0 and uniquely minimize surface area
among such hypersurfaces. Indeed, to apply Theorem \ref{thm:SingleFiberGeneralMinimizeTanIso}-\emph{(2)}
we can produce an appropriate convex function using the same ideas
as in the proof of Corollary \ref{cor:LargeVerticalFibers}. We can
also see this graphically in Figure \ref{fig:GraphsFPsi} \emph{(b)}. 
\end{enumerate}
\end{example}
\begin{figure}[t]
\begin{tabular}{c}
\begin{tabular}{>{\raggedright}p{0.4\textwidth}>{\raggedright}p{0.4\textwidth}}
\begin{tabular}{>{\centering}p{0.4\textwidth}}
\includegraphics[width=0.38\textwidth]{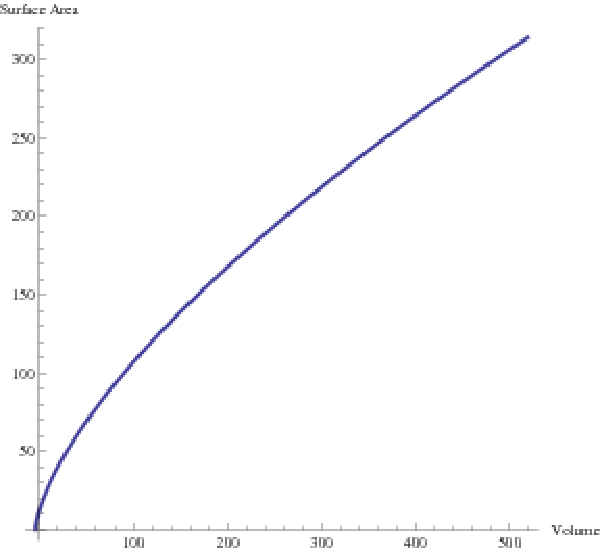}

$\Psi_{V}=\Psi_{S}=1$. \tabularnewline
\end{tabular} & %
\begin{tabular}{>{\centering}p{0.4\textwidth}}
\includegraphics[width=0.38\textwidth]{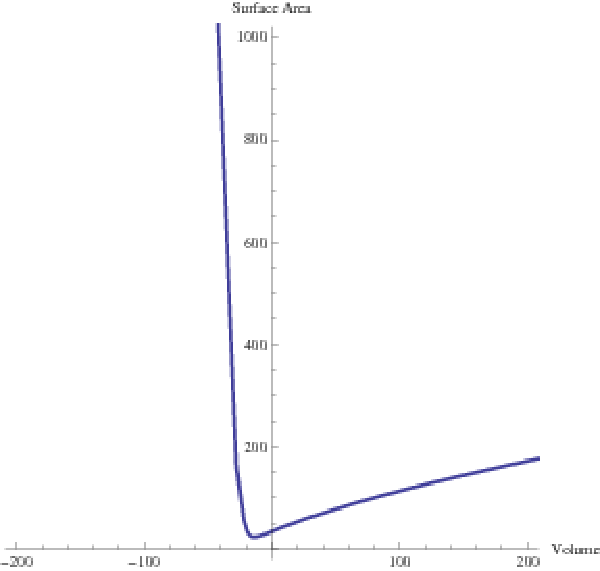}

$\Psi_{V}=\Psi_{S}=e^{r^{-1}}$. \tabularnewline
\end{tabular}\tabularnewline
\emph{(a) }The vertical asymptote at the origin is typical of densities
bounded away from 0 and infinity at the origin, as in the proof of
Corollary \ref{cor:LargeBalls-SimpleDensity}. & \emph{(b) }This space has infinite volume at both the origin and infinity,
and one can visually verify from the graph that Theorem \ref{thm:SingleFiberGeneralMinimizeTanIso}-\emph{(2)}
applies for small spheres which thus minimize surface area among surfaces
with which they bound net volume 0 (see Example \ref{exa:LargeBalls}). \tabularnewline[0.3cm]
\end{tabular}\tabularnewline
\noalign{\vskip\doublerulesep}
\begin{tabular}{>{\raggedright}p{0.4\textwidth}>{\raggedright}p{0.4\textwidth}}
\begin{tabular}{>{\centering}p{0.4\textwidth}}
\includegraphics[width=0.38\textwidth]{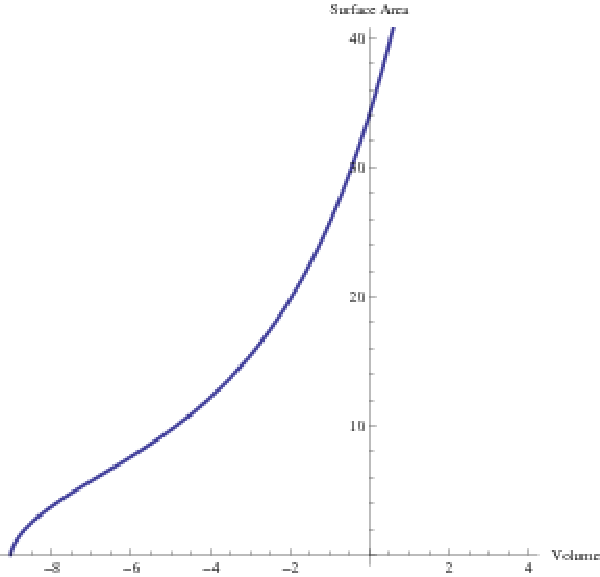}

$\Psi_{V}=e^{r},\;\Psi_{S}=e^{r^{8}}$. \tabularnewline
\end{tabular} & %
\begin{tabular}{>{\centering}p{0.4\textwidth}}
\includegraphics[width=0.38\textwidth]{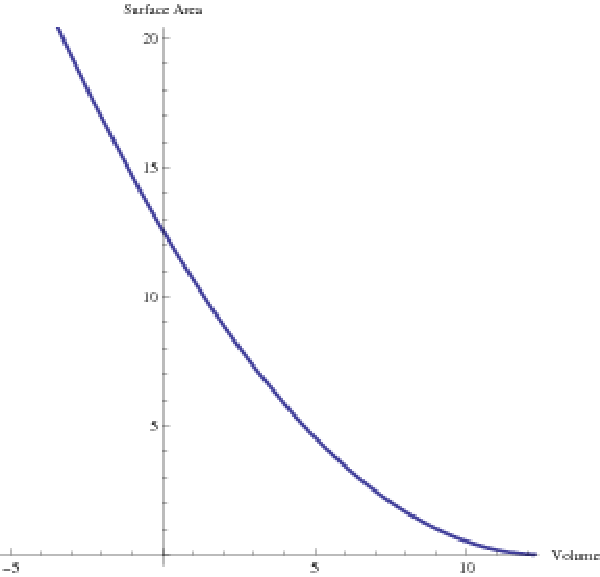}

$\Psi_{V}=\Psi_{S}=r^{-4}$. \tabularnewline
\end{tabular}\tabularnewline
\emph{(c)} One can visually verify from the graph that for this density
the hypotheses of Theorem \ref{thm:SingleFiberGeneralMinimizeTanIso}
are satisfied for large spheres which are thus isoperimetric (see
Example \ref{exa:LargeBalls}).\\ & \emph{(d) }At the origin this density is singular and log-convexity
fails severely (the log of the density tends to positive infinity
at the origin), but ball complements are isoperimetric for all volumes
(see Example \ref{exa:IsoperimExamples}).\\\tabularnewline[\doublerulesep]
\end{tabular}\tabularnewline
\end{tabular}

\caption{\label{fig:GraphsFPsi}Graph of the surface area of the sphere about
the origin of radius $r$ as a function of the signed volume of the
annulus $[1,r]\times S^{2}$ for various densities on $\mathbb{R}^{3}$.}
\end{figure}

\bibliographystyle{plain}
\bibliography{REFMWD}

\end{document}